\theoremstyle{definition}
\newtheorem{definition}{Definition}[section]
\theoremstyle{plain}
\newtheorem{teo}[definition]{Theorem}
\newtheorem{prop}[definition]{Proposition}
\newtheorem{cor}[definition]{Corollary}
\newtheorem{lem}[definition]{Lemma}
\newtheorem{conj}[definition]{Conjecture}
\newtheorem*{constr}{Construction}
\newtheorem*{claim}{Claim}
\newcommand{\numberset}{\mathbb}
\newcommand{\N}{\numberset{N}}
\newcommand{\R}{\numberset{R}}
\newcommand{\Z}{\numberset{Z}}
\newcommand{\Q}{\numberset{Q}}
\newcommand{\ca}{\mathcal}
\DeclareMathOperator{\degree}{d}
\title{Edge colorings and circular flows on regular graphs}
\author{Davide Mattiolo\thanks{Department of Physics, Informatics and Mathematics,
				University of Modena and Reggio Emilia, Via Campi 213/b, 41126 Modena, Italy. Email:~davide.mattiolo@unimore.it}, 	
Eckhard Steffen\thanks{Paderborn Center for Advanced Studies and Institute for Mathematics, Paderborn University, 
Warburger Str. 100, 33098 Paderborn, Germany. Email:~es@upb.de}}
\date{}
\begin{document}
	\maketitle

\begin{abstract}
Let $\phi_c(G)$ be the circular flow number of a bridgeless graph $G$.
In \cite{Stef} it was proved that, for every $t \geq 1$, $G$ is a bridgeless $(2t+1)$-regular 
graph with $\phi_c(G) \in \{2+\frac{1}{t}, 2 + \frac{2}{2t-1}\}$ if and only if $G$ has a perfect matching $M$ such that $G-M$ is bipartite. This implies that $G$ is a class 1 graph. For $t=1$, all graphs with circular flow number bigger than 4 are class 2 graphs. 
We show for all $t \geq 1$, that 
$2 + \frac{2}{2t-1} = \inf \{ \phi_c(G)\colon G \text{ is a } (2t+1) \text{-regular class } 2 \text{ graph}\}$.
This was conjectured to be true in \cite{Stef}. Moreover we prove that $\inf\{ \phi_c(G)\colon G $ is a $ (2t+1)$-regular class $1$ graph with no perfect matching whose removal leaves a bipartite graph$ \} = 2 + \frac{2}{2t-1}$. We further disprove the conjecture that
every $(2t+1)$-regular class $1$ graph has circular flow number at most $2+\frac{2}{t}$.
\end{abstract}

\section{Introduction}

	In this paper we consider finite graphs $G$ with vertex set $V(G)$ and edge set $E(G)$. 
	A graph may contain multiple edges but no loops. A $k$-edge-coloring of a graph $G$ is a function $c\colon E(G)\to \{1,\dots,k\}$. A coloring $c$ is proper if $c(e_1)\ne c(e_2)$ for any two adjacent edges $e_1,e_2\in E(G)$. The chromatic index $\chi'(G)$ of $G$ is the minimum $k$ such that $G$ admits a proper $k$-edge-coloring and it is known that $\chi'(G)\in \{\Delta(G),\dots,\Delta(G)+\mu(G)\}$, where $\mu(G)$ denotes the maximum multiplicity of an edge in $G$ \cite{Vizing}. A graph $G$ is class $1$ if $\chi'(G)=\Delta(G)$ and class $2$ otherwise.
	
Let $r\ge2$ be a real number, a nowhere-zero $r$-flow in a graph $G$ is a pair $(D,f)$ where $D$ is an orientation of $G$ and 
$f\colon E(G)\to [1,r-1]$ is a function such that $\sum_{e\in E^+(v)} f(e) = \sum_{e\in E^-(v)} f(e)$ for each vertex of $v\in G$, 
where $E^+(v)$ and $E^-(v)$ denote the set of all outgoing and incoming arcs at $v$ respectively. The circular flow number of $G$ is
$$\inf\{ r | G \mbox{ has a nowhere-zero $r$-flow} \},$$ and it is denoted by $\phi_c(G)$. 
It was proved in \cite{GTZ} that, for every bridgeless graph, $\phi_c(G)\in \Q $ and it is a minimum. 
If $G$ has a bridge, then it does not admit any nowhere-zero flow. Tutte's $5$-Flow Conjecture \cite{Tutte} states that every bridgeless graph admits a nowhere-zero $5$-flow, and it is one of the most important conjectures in this field. 
It is well known that it is equivalent to its restriction to cubic graphs. A snark is a cyclically 4-edge connected cubic graph
with girth at least 5 and which does not admit a nowhere-zero 4-flow. 

Tutte (see \cite{Tutte2} and \cite{Tutte}) proved that a cubic graph $G$ has a nowhere-zero 3-flow if and only if  
$G$ is bipartite and that $G$ has a nowhere-zero 4-flow if and only if $G$ is a class 1 graph. In \cite{Stef_circ_flow}
it is shown that there is no cubic graph $H$ with $3 < \phi_c(H) < 4$. 
In \cite{Stef} these results are generalized to $(2t+1)$-regular graphs. 
	
	\begin{teo}[\cite{Stef}]\label{teo:bipartite}
		A $(2t+1)$-regular graph $G$ is bipartite if and only if $\phi_c(G)= 2+ \frac{1}{t}$. Furthermore, if $G$ is not bipartite, then $\phi_c(G)\ge 2+ \frac{2}{2t-1}$.
	\end{teo}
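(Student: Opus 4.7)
The plan is to analyze the vertex balance $\sum_{e \in E^+(v)} f(e) = \sum_{e \in E^-(v)} f(e)$ together with the range constraint $f(e) \in [1, r-1]$, and to extract structural information by comparing minimal and maximal possible flow sums. Let $k(v)$ be the outdegree of $v$ under the orientation $D$ of a nowhere-zero $r$-flow, and let $s$ be the common flow sum at $v$. Since every flow value lies in $[1, r-1]$, one obtains $s \ge \max(k, 2t+1-k)$ and $s \le \min(k, 2t+1-k)(r-1)$, hence $r - 1 \ge (t+1)/t$. This gives the universal lower bound $\phi_c(G) \ge (2t+1)/t$ for every bridgeless $(2t+1)$-regular graph $G$. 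For the matching upper bound when $G$ is bipartite with parts $A, B$, I would pick a $t$-factor $M$ (which exists by König's theorem applied to the bipartite regular graph $G$), orient $M$ from $A$ to $B$ with flow value $(t+1)/t$, and orient $G - M$ from $B$ to $A$ with flow value $1$; a direct check shows both sides of the balance equation equal $t+1$ at every vertex.

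To prove the converse of the equivalence, I would inspect what the equality $\phi_c(G) = (2t+1)/t$ enforces. The previous inequalities collapse to $k(v) \in \{t,t+1\}$ with common sum $s = t+1$; every edge must carry flow exactly $1$ or exactly $(t+1)/t$; and the ``heavy'' edges are precisely those oriented from $A := \{v : k(v) = t\}$ to $B := \{v : k(v) = t+1\}$. Consistency of the orientation then prevents any edge from having both endpoints in $A$ or both in $B$, showing that $(A,B)$ is a bipartition.

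For the ``furthermore'' part, assume $r := \phi_c(G) < 4t/(2t-1)$, so that $r - 1 < (2t+1)/(2t-1)$. The ratio argument again yields $k(v) \in \{t,t+1\}$ at every vertex, and I would again partition $V(G) = A \cup B$ accordingly. The crucial next step is a refined bound on individual edge flows. At $v \in A$ an outgoing flow satisfies $f \ge s - (t-1)(r-1) \ge (t+1) - (t-1)(r-1) > 2t/(2t-1)$, while an incoming flow at $v \in A$ satisfies $f \le s - t \le t(r-2) < 2t/(2t-1)$. By symmetry, at $v \in B$ outgoing flows lie strictly below $2t/(2t-1)$ and incoming flows strictly above. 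Comparing the two ends of an arbitrary edge then rules out edges inside $A$ and edges inside $B$, so $G$ must be bipartite.

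The main obstacle is the last step: the outdegree constraint $k(v) \in \{t, t+1\}$ by itself does not imply bipartiteness, and only the individual-edge estimate does. Both strict inequalities hinge on the identity $(t+1)(2t-1) - (t-1)(2t+1) = 2t$, which produces the common threshold $2t/(2t-1)$ at which the outgoing flows at $A$ separate from the outgoing flows at $B$. Pinning down this threshold and keeping the estimates strict under the assumption $r < 4t/(2t-1)$ is the heart of the argument and the point where the bound $2 + 2/(2t-1)$ is seen to be exactly the correct gap.
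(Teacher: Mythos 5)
Your argument is correct, but note that the paper does not prove Theorem \ref{teo:bipartite} at all --- it is imported from \cite{Stef} --- so the natural comparison is with the balanced-valuation route that both \cite{Stef} and this paper's own toolkit (Theorem \ref{bv}) rely on. In that approach one observes that a nowhere-zero $r$-flow on a $(2t+1)$-regular graph yields $\omega(v)=k_v\frac{r}{r-2}$ with $k_v$ odd; the singleton cuts force $\frac{r}{r-2}\le 2t+1$, i.e.\ $r\ge 2+\frac1t$, and if $r<2+\frac{2}{2t-1}$ then $\frac{r}{r-2}>2t$ forces $|k_v|=1$ for all $v$, after which the two-vertex cut $X=\{u,v\}$ for an edge $uv$ with like-signed ends gives $|\omega(u)+\omega(v)|>4t\ge|\partial_G(X)|$, a contradiction --- so the sign classes form the bipartition. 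Your proof reaches the same partition by classifying out-degrees ($k(v)\in\{t,t+1\}$, which is exactly the paper's $r$-bipartition $(\mathcal B,\mathcal W)$ in disguise) and then excludes monochromatic edges by pinning down individual edge flow values around the threshold $\frac{2t}{2t-1}$ instead of using cut inequalities. What your version buys is self-containedness and an explicit flow for the bipartite direction (the $t$-factor construction is exactly right); what the balanced-valuation version buys is shorter bookkeeping, since one two-vertex cut replaces your four separate one-sided estimates.

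One small imprecision worth fixing: for $t=1$ the chain $f\ge(t+1)-(t-1)(r-1)>\frac{2t}{2t-1}$ degenerates to $f\ge 2$, not $f>2$, so the lower bounds at $A$-out and $B$-in edges are only non-strict in that case. This does no harm, because in each of the two forbidden configurations (an edge inside $A$ or inside $B$) the opposing bound $f\le t(r-2)<\frac{2t}{2t-1}$ is strict for all $t\ge1$, and one strict inequality per comparison suffices; but the write-up should not claim strictness on both sides uniformly.
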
 

	\begin{teo}[\cite{Stef}]\label{teo:1-factor_bipartite}
		A $(2t+1)$-regular graph $G$ has a $1$-factor $M$ such that $G-M$ is bipartite if and only if $\phi_c(G)\le 2+\frac{2}{2t-1}$.
	\end{teo}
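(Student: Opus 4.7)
I would prove the two implications separately.

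\emph{Only-if direction.} Suppose $\phi_c(G)\le\frac{4t}{2t-1}$. By \cite{GTZ} this infimum is attained and rational, so $G$ admits an integer $(4t,2t-1)$-flow: an orientation $D$ of $G$ together with $g\colon E(G)\to\{2t-1,2t,2t+1\}$ satisfying the conservation law. Writing $\sigma(e)=g(e)-2t\in\{-1,0,1\}$, conservation at a vertex $v$ takes the form
\[
2t\bigl(|E^+(v)|-|E^-(v)|\bigr) \;=\; \sum_{e\in E^-(v)}\sigma(e) \;-\; \sum_{e\in E^+(v)}\sigma(e),
\]
whose right-hand side is bounded by $2t+1$ in absolute value. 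Hence $|E^+(v)|-|E^-(v)|\in\{-1,0,1\}$, and since $\deg(v)=2t+1$ is odd this difference is forced to equal $\pm1$, defining a partition $V(G)=A\sqcup B$. A short case analysis on the admissible $\sigma$-distributions at a vertex of each type shows that every vertex is incident to exactly one edge with $\sigma(e)=0$, so the edges with $g=2t$ form a perfect matching $M$. The same analysis forces edges with $\sigma=-1$ to be oriented from $A$ to $B$ and edges with $\sigma=+1$ from $B$ to $A$; consequently $G-M$ has no edge inside $A$ or inside $B$ and is bipartite with bipartition $(A,B)$.

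\emph{If direction.} Given $M$ with $G-M$ bipartite, fix a bipartition $(A,B)$ of $G-M$ and plan to reverse the construction above: set $g=2t$ on $M$, $g=2t-1$ on each $G-M$-edge oriented $A\to B$, and $g=2t+1$ on each $G-M$-edge oriented $B\to A$. Conservation at $v\in A$ forces the number $\alpha(v)$ of outgoing $G-M$-edges at $v$ to equal $t$ if the $M$-edge at $v$ is oriented out of $v$ and $t+1$ if it is oriented into $v$; symmetrically $\beta(u)\in\{t-1,t\}$ for $u\in B$. The task is then to choose orientations of $M$ and of $G-M$ realising such $\alpha,\beta$ simultaneously.

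The main obstacle is that the natural ``symmetric'' construction---take a $2t$-edge-colouring of $G-M$, orient $t$ colour classes $A\to B$ and the remaining $t$ classes $B\to A$, and orient every $M$-edge from $A$ to $B$---only succeeds when $M$ is a matching between $A$ and $B$; in general $M$ may have edges inside $A$ or inside $B$, as happens already for $K_4$ with $t=1$. To handle this I would orient $M$ arbitrarily, thereby fixing $A_{II}=\{v\in A:\alpha(v)=t+1\}$ and $B_{II}=\{u\in B:\beta(u)=t-1\}$, and then apply Hakimi's theorem on orientations with prescribed out-degrees in the bipartite graph $G-M$. Feasibility then reduces to (i) the degree-sum identity $\sum_A\alpha+\sum_B\beta=|E(G-M)|$, which follows from $|A_{II}|=|B_{II}|$ (the difference equals the number of $M$-edges inside $A$ minus the number inside $B$, which vanishes since $|A|=|B|$), and (ii) the cut inequalities $\sum_{v\in S\cap A}\alpha(v)+\sum_{u\in S\cap B}\beta(u)\ge|E(G-M)[S]|$ for every $S\subseteq V(G)$. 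Establishing (ii)---possibly after re-orienting some $M$-edges via an augmenting-path-type exchange---is the core technical step, and I expect it to follow from the $2t$-regular bipartite structure of $G-M$ together with the constraint that every $M$-edge inside $A$ (resp.\ inside $B$) has exactly one endpoint in $A_{II}$ (resp.\ in $B_{II}$).
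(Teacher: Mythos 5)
This theorem is imported from \cite{Stef} and not reproved in the present paper, so your argument can only be judged on its own terms. Your only-if direction is correct and complete: passing to an integer $(4t,2t-1)$-flow via \cite{GTZ}, the forced imbalance $|E^+(v)|-|E^-(v)|=\pm1$, and the loss-counting argument showing that exactly one edge at each vertex has $\sigma=0$ are all sound, and they do produce the perfect matching $M=g^{-1}(2t)$ together with the bipartition of $G-M$.

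The if direction, however, has a genuine gap exactly where you flag it. The cut condition (ii) for Hakimi's theorem is \emph{not} automatic for an arbitrary orientation of $M$: multiedges are allowed, so take $a\in A$ and $b\in B$ joined by $2t$ parallel edges of $G-M$, with the $M$-edges of $a$ and of $b$ leading elsewhere and both oriented out of their endpoint in $\{a,b\}$; then for $S=\{a,b\}$ one gets $\alpha(a)+\beta(b)=t+(t-1)=2t-1<2t=|E((G-M)[S])|$. (A concrete $5$-regular instance: vertices $a,b,a',b'$ with four parallel edges $ab$, four parallel edges $a'b'$, and $M=\{aa',bb'\}$.) Hence the entire weight of the proof rests on the unproved claim that \emph{some} orientation of $M$ satisfies all cut inequalities simultaneously, and ``an augmenting-path-type exchange'' is a placeholder for that missing argument, not a proof. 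Note that this direction follows in a few lines from Theorem~\ref{bv}, which the paper already states: set $\omega\equiv 2t$ on $A$ and $\omega\equiv-2t$ on $B$. Since $G-M$ is a $2t$-regular bipartite graph with parts $A$ and $B$, summing its degrees over $X\cap A$ and over $X\cap B$ and subtracting gives $2t\,\bigl||X\cap A|-|X\cap B|\bigr|\le|\partial_{G-M}(X)|\le|\partial_G(X)|$ for every $X\subseteq V(G)$, so $\omega$ is a balanced valuation with $\frac{r}{r-2}=2t$, i.e.\ $r=2+\frac{2}{2t-1}$, and no integer flow needs to be constructed at all. I would recommend replacing your orientation-feasibility argument by this balanced-valuation computation, or else supplying the exchange argument in full.
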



In \cite{Stef} it is further shown that, different from the cubic case, for every $t\ge 2$, there is no flow number that separates 
$(2t+1)$-regular class $1$ graphs from class $2$ ones. In particular Theorem \ref{teo:1-factor_bipartite} implies that a $(2t+1)$-regular graph $G$ having $\phi_c(G)\le2+ \frac{2}{2t-1}$ is class $1$ and it was conjectured that this is the biggest flow number $r$ such that every $(2t+1)$-regular graph 
$H$ with $\phi_c(H)\le r$ is class $1$. 
Let $ \Phi^{(2)}(2t+1) :=\inf \{ \phi_c(G)\colon G \text{ is a } (2t+1) \text{-regular class } 2 \text{ graph}\}$.
	
	\begin{conj}[\cite{Stef}]\label{conj:Stef_inf}
		For every integer $t\ge1$ $$ \Phi^{(2)}(2t+1) = 2+\frac{2}{2t-1}. $$
	\end{conj}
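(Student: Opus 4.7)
The lower bound $\Phi^{(2)}(2t+1) \ge 2 + \frac{2}{2t-1}$ is immediate from Theorem \ref{teo:1-factor_bipartite}: any $(2t+1)$-regular $G$ with $\phi_c(G) \le 2 + \frac{2}{2t-1}$ admits a $1$-factor $M$ with $G - M$ bipartite, and is therefore class~$1$, as the authors note just before the conjecture. Consequently every $(2t+1)$-regular class $2$ graph has $\phi_c > 2 + \frac{2}{2t-1}$, and to prove the conjecture it suffices to construct, for every $\epsilon > 0$, a $(2t+1)$-regular class $2$ graph with $\phi_c \le 2 + \frac{2}{2t-1} + \epsilon$.

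My plan for this upper bound is to start from an extremal instance of Theorem \ref{teo:1-factor_bipartite} and perturb it minimally. Pick a large $(2t+1)$-regular graph $G_0$ with a $1$-factor $M_0$ such that $G_0 - M_0$ is bipartite and $\phi_c(G_0) = 2 + \frac{2}{2t-1}$, together with an explicit nowhere-zero $(2 + \frac{2}{2t-1})$-flow $(D_0,f_0)$ realising this bound. Then perform a local surgery---an edge swap or a split-and-paste between two suitably chosen edges---that preserves $(2t+1)$-regularity but destroys the ``$1$-factor plus bipartite remainder'' structure. By the contrapositive of Theorem \ref{teo:1-factor_bipartite}, the perturbed graph $G_\epsilon$ is class $2$ provided no other $1$-factor of $G_\epsilon$ leaves a bipartite complement. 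To extend $(D_0,f_0)$ to $G_\epsilon$, shift the flow values along a long cycle passing through the swapped edges; by choosing the cycle long enough, the shift perturbs each individual flow value by at most $\epsilon/2$, so the resulting flow is an $r$-flow on $G_\epsilon$ with $r \le 2 + \frac{2}{2t-1} + \epsilon$.

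To make arbitrarily long adjustment cycles available, I would take $G_0$ itself to depend on $\epsilon$---for instance a ``prism'' over a $(2t+1)$-regular bipartite graph, with $M_0$ the prismatic matching---so that $G_0 - M_0$ contains long cycles along which the flow can be shifted uniformly by a small amount. The main obstacle I anticipate is the class $2$ verification: ensuring that no $1$-factor of $G_\epsilon$ leaves a bipartite complement is a global property that cannot be checked only by inspecting the perturbation. I expect this step to require a careful choice of $G_0$ and of the swap site, combined with an exchange argument showing that any hypothetical ``good'' $1$-factor of $G_\epsilon$ would pull back to a $1$-factor of $G_0$ leaving a non-bipartite complement, contradicting the choice of $M_0$. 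The cubic case ($t=1$) is the known construction of snarks with $\phi_c$ approaching $4$ (cf.\ \cite{Stef_circ_flow}), and my aim would be to extend that template to $(2t+1)$-regular graphs, with the case analysis dictated by the combinatorics of $M_0$ and the parity of $t$.
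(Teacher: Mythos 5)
Your lower bound is exactly the paper's: Theorem \ref{teo:1-factor_bipartite} shows any $(2t+1)$-regular graph with $\phi_c\le 2+\frac{2}{2t-1}$ has a $1$-factor leaving a bipartite remainder, hence is class $1$. The upper bound, however, has two genuine gaps. The central one is the inference ``$G_\epsilon$ is class $2$ provided no $1$-factor of $G_\epsilon$ leaves a bipartite complement.'' Theorem \ref{teo:1-factor_bipartite} is a statement about the circular flow number, not the chromatic index: its contrapositive only gives $\phi_c(G_\epsilon)>2+\frac{2}{2t-1}$. For $t\ge 2$ the absence of such a $1$-factor does \emph{not} imply class $2$ --- indeed the paper simultaneously proves $\Phi^{(1)}(2t+1)=2+\frac{2}{2t-1}$ by exhibiting class $1$ graphs with no such $1$-factor and flow number arbitrarily close to $2+\frac{2}{2t-1}$ (built from Flower snarks). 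So your surgery could perfectly well produce a class $1$ graph, and you have no mechanism to certify class $2$. The second gap is the flow adjustment: pushing flow $\delta$ around a cycle changes every edge on that cycle by exactly $\delta$, independent of the cycle's length, so ``choosing the cycle long enough'' does not make the perturbation small; the amount you must reroute is dictated by the conservation defect at the surgery site, which is bounded away from $0$.

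The paper's route is different and worth internalizing. It reduces everything to cubic graphs: take a family of snarks $G_n$ (iterated dot products of Blanu\v sa snarks) with $\phi_c(G_n)=4+\frac{1}{n+1}\to 4^+$, together with perfect matchings $M_n$ that (a) pair the black and white vertices of the $r_n$-bipartition coming from the flow, and (b) have the property that $G_n+(2t-2)M_n$ is class $2$ for every $t$. Then $H_n=G_n+(2t-2)M_n$ is $(2t+1)$-regular; a balanced-valuation computation (Lemma \ref{lem:asymptotic}) shows $\phi_c(H_n)\le 2+\frac{2(r_n-2)}{r_n+(2t-3)(r_n-2)}\to 2+\frac{2}{2t-1}$, precisely because the added matching edges join oppositely coloured vertices and so strengthen every edge cut relative to its imbalance. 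The class $2$ certification is where the real work lies, and it is done structurally: Petersen plus any sum of perfect matchings is class $2$ (Lemma \ref{lem:P_class2}), and the Parity Lemma applied to the $4$-edge-cut of a dot product propagates the class $2$ property through the construction (Lemma \ref{lem:class 2}). If you want to salvage your plan, this is the ingredient to import --- some edge-cut or parity obstruction that survives the passage from the cubic gadget to the $(2t+1)$-regular graph --- rather than relying on the nonexistence of a bipartiting $1$-factor.
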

	

We prove Conjecture \ref{conj:Stef_inf} in Section \ref{PROOF}. Moreover, let us define $\ca G_{2t+1}:=\{G \colon G$ is a $(2t+1)$-regular class $1$ graph such that there is no perfect matching $M$ of $G$ such that $G-M$ is bipartite$ \}$. Consider the following parameter: $$ \Phi^{(1)}(2t+1):=\inf\{\phi_c(G)\colon G \in \ca G_{2t+1} \}. $$ In Section \ref{PROOF} we further prove that $\Phi^{(1)}(2t+1)=2+\frac{2}{2t-1}$, for all positive integers $t$.

If a graph $G$ has a small odd edge cut, say of cardinality $2k+1$, then $\phi_c(G) \geq 2 + \frac{1}{k}$.
Recall that an $r$-graph is an $r$-regular graph $G$ such that $|\partial_G(X)|\ge r $, for every $X\subseteq V(G)$ with $|X|$ odd, where $|\partial_G(X)|$ denotes the set of edges of $G$ with exactly one end in $X$. The circular flow number
of the complete graph $K_{2t+2}$ on $2t+2$ vertices is $2 + \frac{2}{t}$ \cite{Stef_circ_flow} and $K_{2t+2}$
is a class 1 graph.  
	
	\begin{conj}[\cite{Stef}]\label{conj:Stef_class1}
		Let $G$ be a $(2t+1)$-regular class $1$ graph. Then $\phi_c(G)\le 2 + \frac{2}{t}$.
	\end{conj}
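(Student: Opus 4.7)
The final statement is Conjecture~\ref{conj:Stef_class1}, and the abstract explicitly announces that the paper \emph{disproves} it. So the appropriate ``proof'' is a counterexample plan: for some $t\ge 2$, exhibit a $(2t+1)$-regular class~$1$ graph $G$ with $\phi_c(G) > 2+\tfrac{2}{t}$.

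The lever I would use is the small-odd-cut lower bound quoted immediately before the conjecture: an odd edge cut of cardinality $2k+1$ forces $\phi_c(G)\ge 2+\tfrac{1}{k}$. To beat the conjectured upper bound it suffices to find $k$ with $\tfrac{1}{k}>\tfrac{2}{t}$, that is, $k<\tfrac{t}{2}$. The smallest odd cut has size $3$ ($k=1$), giving $\phi_c(G)\ge 3$, which strictly exceeds $2+\tfrac{2}{t}$ whenever $t\ge 3$. So for $t\ge 3$ the task collapses to: build a $(2t+1)$-regular \emph{class~$1$} graph containing an odd edge cut of cardinality $3$.

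The construction I would try is to take two copies of $K_{2t+2}$ (which is $(2t+1)$-regular and class~$1$, as noted in the excerpt), delete three edges inside each copy so that three vertices on each side have degree $2t$, and join the two copies by a matching of three new edges between these degree-deficient vertices. The result is $(2t+1)$-regular and the three bridging edges form an odd edge cut of size $3$. Class~$1$ is verified by starting from the canonical proper $(2t+1)$-edge colorings of the two $K_{2t+2}$'s, deleting the appropriate color classes/edges, and then permuting colors on one side so that the two partial colorings agree on the three bridging edges; since $K_{2t+2}$ has large color-permutation symmetry and only three edges need to be synchronized, such an alignment should be achievable.

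The main obstacle I expect is precisely the class~$1$ verification: small odd edge cuts are typically obstructions to class~$1$ (e.g.\ they are a hallmark of snarks in the cubic case), so the choice of which edges to delete inside each copy, and how to match the three boundary vertices, must be made carefully to preserve a proper $(2t+1)$-edge coloring and to allow the color alignment across the cut. A secondary issue is the case $t=2$: there the conjectured bound equals $3$, exactly what a single $3$-edge cut delivers, so strict inequality cannot be obtained from this argument alone, and one would have to either use multiple cuts, a larger/more subtle gadget forcing $\phi_c$ above $3$, or concede that the disproof only covers $t\ge 3$.
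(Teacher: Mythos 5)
Your plan cannot work, and the obstruction is already stated in the paper: the Parity Lemma. In a $(2t+1)$-regular class~$1$ graph with a proper $(2t+1)$-edge-coloring, every color class must meet every odd edge cut an odd number of times, hence at least once; since there are $2t+1$ color classes, every odd edge cut has at least $2t+1$ edges. This is exactly why the paper remarks that $(2t+1)$-regular class~$1$ graphs are $(2t+1)$-graphs (so that Conjecture~\ref{conj:Stef_r-graph} implies Conjecture~\ref{conj:Stef_class1}). A $3$-edge cut in a class~$1$ graph therefore forces $2t+1\le 3$, i.e.\ $t=1$; for $t\ge 2$ the graph you describe is automatically class~$2$, and no amount of careful color alignment across the cut can rescue it. The best the small-odd-cut bound can ever give for a class~$1$ graph is $\phi_c\ge 2+\frac{1}{t}$ (from a cut of size $2t+1$), which lies \emph{below} the conjectured bound $2+\frac{2}{t}$, so this lever is structurally incapable of producing a counterexample. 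Your specific gadget also fails at the level of degrees: in a $(2t+1)$-regular graph $|\partial(X)|\equiv |X|\pmod 2$, so a $3$-edge cut needs an odd number of vertices on each side, whereas $K_{2t+2}$ has even order; moreover, deleting whole edges inside one copy changes the degree sum by an even number, so you cannot leave exactly three vertices with degree $2t$.

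The paper's actual disproof goes through a completely different mechanism: it takes the Han--Li--Wu--Zhang counterexamples $M_p$ to Jaeger's Circular Flow Conjecture, which are \emph{highly edge-connected} graphs (so no small cuts are involved at all) satisfying $\phi_c(M_p)>2+\frac{1}{p}$, and then performs vertex expansions and suppressions --- operations that do not decrease the circular flow number --- to turn $M_p$ into a $(4p+1)$-regular graph $\tilde{M_p}$ that admits a proper $(4p+1)$-edge-coloring by construction. Setting $t=2p$ gives a $(2t+1)$-regular class~$1$ graph with $\phi_c>2+\frac{1}{p}=2+\frac{2}{t}$. The lower bound on $\phi_c$ comes from a global flow obstruction inherited from $M_p$, not from a local cut condition; any approach based on small odd cuts is incompatible with being class~$1$ and should be abandoned.
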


	\begin{conj}[\cite{Stef}]\label{conj:Stef_r-graph}
		Let $G$ be a $(2t+1)$-graph. Then $\phi_c(G)\le 2 + \frac{2}{t}$.
	\end{conj}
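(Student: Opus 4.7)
My plan would be to attempt a direct construction of a nowhere-zero $\frac{2t+2}{t}$-flow on $G$, whose edge values can be taken as integers in $\{t, t+1, t+2\}$ after rescaling. A naive uniform-weight decomposition, say into two perfect matchings $M_1, M_2$ carrying the extreme values $t$ and $t+2$ together with a $(2t-1)$-factor $F$ carrying the median value $t+1$, already breaks at every vertex: for $t \ge 2$, vertex-balance forces the out/in imbalance $a-b$ in $F$ to be $\pm 2$, which is impossible since $a+b = 2t-1$ is odd. So the construction must be non-uniform, letting different edges take different values within $\{t, t+1, t+2\}$ according to the global structure of $G$.

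A more promising route is to proceed in two stages. First, use Seymour's theorem on $r$-graphs, possibly in a refined form guaranteeing perfect matchings that avoid prescribed odd cuts, to decompose a portion of $E(G)$ into a small number of perfect matchings with controlled orientations. Second, encode the remaining flow-balance constraints vertex-by-vertex as a $T$-join problem on the residual subgraph, and use the $r$-graph hypothesis (every odd cut of $G$ has size at least $2t+1$) to guarantee feasibility of that $T$-join. This two-stage philosophy, exploiting odd-cut size to control the orientation of an edge-subset of prescribed parity, is precisely the engine behind Theorems \ref{teo:bipartite} and \ref{teo:1-factor_bipartite}; the task would be to extend it past the setting in which the residual subgraph is bipartite.

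The main obstacle is that the $(2t+1)$-graph hypothesis must enter essentially and globally, beyond merely guaranteeing existence of perfect matchings. Indeed, the above program cannot succeed as stated when $t=1$: a $3$-graph is simply a bridgeless cubic graph, for which the bound $\phi_c(G)\le 4$ is Tutte's $4$-flow conjecture, and the Petersen graph is a counterexample. Any proof must therefore exploit a property of $(2t+1)$-graphs that becomes effective only for $t \ge 2$; isolating that property and integrating it into a $T$-join feasibility argument is, I expect, the genuine difficulty, requiring structural input well beyond Seymour's theorem and closer in spirit to the Berge--Fulkerson circle of ideas.
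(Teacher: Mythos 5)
There is a fatal problem here that is not a gap in your argument but in the goal itself: the statement you are trying to prove is false, and the paper's treatment of it is a \emph{disproof}, not a proof. Conjecture~\ref{conj:Stef_r-graph} is quoted from \cite{Stef} precisely so that it can be refuted. In Section~3 the authors take the Han--Li--Wu--Zhang counterexamples $M_p$ to Jaeger's Circular Flow Conjecture \cite{Zhang1} (for $p\ge 3$, a $4p$-edge-connected graph with $\phi_c(M_p) > 2+\frac{1}{p}$), and by vertex expansions and edge splittings --- operations that never decrease the circular flow number --- turn $M_p$ into a $(4p+1)$-regular class~$1$ graph $\tilde{M_p}$ with $\phi_c(\tilde{M_p}) > 2+\frac{1}{p}$. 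Setting $k=2p$, this is a $(2k+1)$-regular class~$1$ graph, hence a $(2k+1)$-graph, with $\phi_c > 2+\frac{2}{k}$. So the conjecture already fails for $t=6$ (i.e.\ $13$-regular graphs), and for infinitely many larger $t$.

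Your own analysis contained the warning sign: you correctly observed that for $t=1$ the claim is equivalent to the (false for $3$-graphs as stated, Petersen) bound $\phi_c\le 4$, and you then posited that some property of $(2t+1)$-graphs ``becomes effective only for $t\ge 2$.'' No such property exists in the strength you need; the obstruction you met at $t=1$ is not an artifact of small degree. Neither the uniform three-value flow construction nor the Seymour-decomposition-plus-$T$-join program can be completed, because any completed version would contradict the existence of $\tilde{M_p}$. The constructive lesson of the paper is that the right move when a flow-number upper bound resists proof for $r$-graphs is to look for counterexamples among highly edge-connected graphs with large circular flow number and then regularize them without losing that flow number.
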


	Since $(2t+1)$-regular class $1$ graphs are $(2t+1)$-graphs Conjecture \ref{conj:Stef_r-graph} implies Conjecture \ref{conj:Stef_class1}. We show that both these conjectures are false by constructing $(2t+1)$-regular class $1$ graphs with circular flow number greater than $2+\frac{2}{t}$. The construction of the counterexamples relies
on a family of counterexamples to Jaeger's Circular Flow Conjecture \cite{Jaeger} which was 
given by Han, Li, Wu, and Zhang in \cite{Zhang1}. 
	
\section{Circular flow number of $(2t+1)$-regular graphs.} \label{PROOF}
	
Let $G$ and $H$ be two connected cubic graphs with at least $6$ vertices. Let $G'= G - \{v_1v_2, v_3v_4\}$,
where $\{v_1v_2, v_3v_4\}$ is a matching of $G$. Let $u,w$ be two adjacent vertices of $H$ and let $H'=H-\{u,w\}$. 
Furthermore, let $u_1,u_2$ and $w_1,w_2 \in V(H)$ be the neighbors $u$ and $w$ in $G$, respectively,
which are elements of $V(H')$. The dot product $G \cdot H$ is defined to be the graph with 
	$V(G\cdot H) = V (G) \cup V (H')$, and $E(G \cdot H) = E(G') \cup E(H') \cup \{v_1u_1 , v_2u_2 , v_3w_1 , v_4w_2 \}.$
	
	The following is a well known result that goes back to Izbicki \cite{Izbicki_66}.
	
	\begin{lem}[Parity Lemma]
		Let $G$ be a $(2t+1)$-regular graph of class $1$ and $c\colon E(G)\to \{1,2,\dots,2t+1\}$ a proper edge-coloring of $G$. Then, for every edge-cut $C\subseteq E(G)$ and color $i$, the following relation holds $$ |C\cap c^{-1}(i)| \equiv |C| \mod 2. $$
	\end{lem}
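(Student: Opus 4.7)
The plan is to exploit the fact that, in a proper edge-coloring of a $(2t+1)$-regular class $1$ graph using exactly $2t+1$ colors, each color class must be a perfect matching. Indeed, every vertex is incident to $2t+1$ edges and must see each of the $2t+1$ colors exactly once, so $c^{-1}(i)$ saturates every vertex.

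Next, I would fix an arbitrary edge-cut $C = \partial_G(X)$ for some $X \subseteq V(G)$ and argue color by color. Let $M_i = c^{-1}(i)$ and split $M_i$ into edges inside $X$, edges inside $V(G) \setminus X$, and edges crossing the cut. Since $M_i$ is a perfect matching, every vertex of $X$ is covered exactly once, so if $m_i$ denotes the number of edges of $M_i$ with both ends in $X$ and $k_i := |C \cap M_i|$, then $2 m_i + k_i = |X|$, giving the congruence
\[
|C \cap c^{-1}(i)| \equiv |X| \pmod 2.
\]

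Finally, I would sum this congruence over the $2t+1$ colors: the left-hand side totals $|C|$, while the right-hand side becomes $(2t+1)|X| \equiv |X| \pmod 2$. This yields $|C| \equiv |X| \pmod 2$, and combining with the per-color congruence gives $|C \cap c^{-1}(i)| \equiv |C| \pmod 2$, as required. No real obstacle is expected: the whole argument is a double application of the perfect-matching/parity observation, and the hypothesis that $G$ is class $1$ is used only to ensure that each color class is a perfect matching.
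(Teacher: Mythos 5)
Your argument is correct: each color class of a proper $(2t+1)$-edge-coloring of a $(2t+1)$-regular graph is indeed a perfect matching, the count $2m_i + k_i = |X|$ gives $|C\cap c^{-1}(i)| \equiv |X| \pmod 2$ for each color, and summing over the odd number of colors yields $|C| \equiv |X| \pmod 2$, which combines to the stated congruence. The paper itself offers no proof of this lemma (it is cited as a classical result of Izbicki), and what you have written is precisely the standard argument, so there is nothing to compare against and no gap to report.
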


	Let $G$ be a graph and let $M\subseteq E(G)$. We denote by $G+M$ the graph obtained by adding a copy of $M$ to $G$. Such a graph has vertex set $V(G+M)=V(G)$ and edge set $E(G+M)=E(G)\cup M'$, where $M'$ is a copy of $M$.
	Let $G_1$ and $G_2$ be cubic graphs having perfect matching $M_1$ and $M_2$ respectively. Let $G$ be a dot product $G_1\cdot G_2$ where we remove from $G_1$ two non-adjacent edges $e_1,e_2 \in E(G_1- M_1)$ and from $G_2$ two adjacent vertices $x,y$ such that $xy\in M_2$. Then we say that $G$ is an $(M_1,M_2)$-dot-product of $G_1$ and $G_2$.
	
	Moreover, let $H$ be a cubic graph with a perfect matching $M_3$ such that, for all positive integers $t$, $H+(2t-2)M_3$ is a 
	$(2t+1)$-regular class $1$ (resp. class $2$) graph. Then we say that $H$ has the $M_3$-class-$1$ (resp. $M_3$-class-$2$) property.
		
\begin{lem}\label{lem:class 2}
	For $i=1,2$, let $G_i$ be a cubic graph having the $M_i$-class-$2$ property, where $M_i$ is  a perfect matching of $G_i$. Moreover let $G$ be an $(M_1,M_2)$-dot-product of $G_1$ and $G_2$ and $x,y\in V(G_2)$ the two adjacent vertices that have been removed from $G_2$ when constructing $G$. Then $M=M_1\cup M_2\setminus \{xy\}$ is a perfect matching of $G$ and $G$ has the $M$-class-$2$ property.
\end{lem}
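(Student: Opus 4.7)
First I verify that $M := M_1 \cup (M_2 \setminus \{xy\})$ is a perfect matching of $G$: the vertices $V(G_1)$ are covered by $M_1$ (which survives the dot-product because the removed edges $e_1, e_2$ lie outside $M_1$), and the vertices $V(G_2) \setminus \{x,y\}$ are covered by $M_2 \setminus \{xy\}$ (no other edge of $M_2$ is incident to $x$ or $y$). Write $\tilde{G} := G + (2t-2)M$ and $\tilde{G_i} := G_i + (2t-2)M_i$; by direct degree count these are all $(2t+1)$-regular, and by hypothesis each $\tilde{G_i}$ is class $2$. My goal is to prove that $\tilde{G}$ is class $2$ for every $t \ge 1$, which I do by contradiction.

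\textbf{The critical $4$-edge-cut.} Suppose $c$ is a proper $(2t+1)$-edge-coloring of $\tilde{G}$. The four bridge edges $F = \{v_1x_1, v_2x_2, v_3y_1, v_4y_2\}$ of the dot product do not lie in $M$, so they still form an edge-cut of $\tilde{G}$ that separates $V(G_1)$ from $V(G_2) \setminus \{x,y\}$. Let $c_1, c_2, c_3, c_4$ be the colors they receive. By the Parity Lemma each color appears an even number of times on $F$, forcing the color multiset on $F$ to be either $\{a,a,a,a\}$ or $\{a,a,b,b\}$ with $a \ne b$. Grouping by the equality pattern of the $c_i$ leaves two possibilities: (I) $c_1 = c_2$ and $c_3 = c_4$; or (II) $c_1 \ne c_2$ and $\{c_1,c_2\} = \{c_3,c_4\}$ as sets.

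\textbf{Extending to $\tilde{G_1}$ or $\tilde{G_2}$.} In case (I), extend the restriction of $c$ to the edges shared with $\tilde{G_1}$ by setting $c(e_1) := c_1$ and $c(e_2) := c_3$. The $2t$ edges at $v_1$ other than $e_1$ in $\tilde{G_1}$ are identical to the $2t$ edges at $v_1$ other than $v_1x_1$ in $\tilde{G}$, hence they use exactly the $2t$ colors of $\{1,\ldots,2t+1\} \setminus \{c_1\}$, so assigning $c_1$ to $e_1$ is proper at $v_1$; the symmetric check at $v_2$ uses $c_2 = c_1$, and $v_3,v_4$ are handled analogously; non-adjacency of $e_1$ and $e_2$ in $G_1$ lets their colors coincide freely. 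This yields a proper $(2t+1)$-edge-coloring of $\tilde{G_1}$, contradicting the $M_1$-class-$2$ property. In case (II), extend the restriction of $c$ to $\tilde{G_2}$ by coloring $xx_1, xx_2, yy_1, yy_2$ with $c_1, c_2, c_3, c_4$ respectively, and by assigning the $2t-1$ parallel edges between $x$ and $y$ (the single edge $xy \in E(G_2)$ together with its $2t-2$ copies in $M_2$) the $2t-1$ colors of $\{1,\ldots,2t+1\} \setminus \{c_1,c_2\}$ bijectively. Properness at each $x_i$ and $y_j$ is inherited from $c$; at $x$ all $2t+1$ colors appear, since $c_1 \ne c_2$ use two of them and the parallel edges exhaust the remaining $2t-1$; at $y$ the same holds because $\{c_3,c_4\} = \{c_1,c_2\}$. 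This contradicts the $M_2$-class-$2$ property.

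\textbf{Main obstacle.} The delicate part is the local bookkeeping: matching the colors $c_i$ at the ends of the bridge edges to the unique colors available for the deleted edges of $G_1$ in case (I), and for the removed local structure of $G_2$ in case (II). The exhaustiveness of (I) and (II) under the Parity Lemma is what closes the argument; once either branch produces a proper coloring of $\tilde{G_i}$, we contradict the hypothesis, and we conclude that $\tilde{G}$ is class $2$ for every $t \ge 1$, establishing the $M$-class-$2$ property of $G$.
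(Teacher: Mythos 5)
Your proof is correct and follows essentially the same route as the paper's: the same $4$-edge-cut of dot-product edges, the same Parity Lemma dichotomy ($c_1=c_2$ versus $c_1\ne c_2$ with $\{c_3,c_4\}=\{c_1,c_2\}$), and the same transfer of the coloring to $G_1+(2t-2)M_1$ or $G_2+(2t-2)M_2$ to reach a contradiction. You merely spell out the color bookkeeping at $v_1,\dots,v_4$ and at $x,y$ that the paper leaves implicit in the phrase ``a $(2t+1)$-edge-coloring is naturally defined.''
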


\begin{proof} 
Let $e_1 = v_1v_2$, $e_2=v_3v_4$ be the edges that have been removed from 
$G_1$ in order to obtain $G$.
Define $H=G+(2t-2)M$ and $H_i=G_i+(2t-2)M_i$, $i \in \{1,2\}$, and let $a_1,a_2,a_3,a_4$ be the added edges incident 
to $v_1,v_2,v_3$ and $v_4$ respectively. Then $C = \{a_1,a_2,a_3,a_4\}$ is a 4-edge-cut in $H$
which separates $H[V(G_2)-\{x,y\}]$ and $H[V(G_1)]$.

Suppose to the contrary that $H$ is a class $1$ graph. By the Parity Lemma, either $C$ intersects only one color class, or it intersects two color classes in exactly two edges each. Moreover, if $c(a_1)=c(a_2)$, then $c(a_3)=c(a_4)$ and a $(2t+1)$-edge-coloring is defined naturally on $H_1$ by the coloring of $H$ in contradiction to the fact that $H_1$ is a class $2$ graph. Therefore, $c(a_1)\ne c(a_2)$, and so 
$\{c(a_3), c(a_4)\} = \{c(a_1),c(a_2)\}$. In this case a $(2t+1)$-edge-coloring is naturally defined on $H_2$ by the coloring of $H$ leading to a contradiction again.
\end{proof}

	For the following result we will need the concept of balanced valuations, see \cite{Bondy} and \cite{Jaeger_bal_val}. Let $G$ be a graph, a balanced valuation of $G$ is a function $\omega \colon V(G)\to \R$ such that 
	$|\sum_{v\in X} \omega(v)| \le |\partial_G(X)|$, for every $X\subseteq V(G)$. Balanced valuations and nowhere-zero flows are
	equivalent concepts as the following theorem shows (this formulation is given in \cite{Stef_circ_flow}).

	\begin{teo}[\cite{Jaeger_bal_val}] \label{bv}
		Let $G$ be a graph. Then $G$ has a nowhere-zero $r$-flow if and only if there is a balanced valuation $\omega\colon V(G)\to \R$ of $G$ such that, for every $v\in X$ there is an integer $k_v$ such that $k_v \equiv \degree_G(v) \mod{2} $ and $\omega(v)=k_v\frac{r}{r-2}$.
	\end{teo}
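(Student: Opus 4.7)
The plan is to shift the flow values by $r/2$, so that a nowhere-zero $r$-flow $f$ with values in $[1, r-1]$ corresponds to a signed function $g := f - r/2$ with values in $[-(r-2)/2, (r-2)/2]$. This shift aligns the flow-conservation equation with the balanced-valuation inequality in a transparent way, and both directions of the equivalence end up being instances of the same identity seen from opposite sides.

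\emph{Forward direction.} Given a nowhere-zero $r$-flow $(D, f)$, I set $k_v := d_D^+(v) - d_D^-(v)$ and $\omega(v) := \frac{r}{r-2} k_v$. The parity condition is immediate since $k_v \equiv d_D^+(v) + d_D^-(v) = \deg_G(v) \pmod 2$. For the balanced-valuation bound, setting $g(e) := f(e) - r/2$ gives $|g(e)| \le (r-2)/2$, and the flow-conservation identity at a single vertex rewrites as $\sum_{e \in E^+(v)} g(e) - \sum_{e \in E^-(v)} g(e) = -\frac{r}{2} k_v$. Summing over $v \in X$ causes the interior edges to cancel, so the left-hand side becomes a signed sum of $g$ over $\partial_G(X)$, whose absolute value is at most $\frac{r-2}{2}|\partial_G(X)|$. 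Multiplying by $\frac{2}{r-2}$ yields exactly $|\sum_{v \in X} \omega(v)| \le |\partial_G(X)|$.

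\emph{Backward direction.} Assume $\omega$ and the integers $\{k_v\}$ are given. I proceed in two steps. First, I orient $G$ as $D$ so that $d_D^+(v) - d_D^-(v) = k_v$ for all $v$; this reduces to prescribing out-degrees $a_v := (k_v + \deg_G(v))/2$, which are non-negative integers thanks to the parity hypothesis and the single-vertex case $|k_v| \le \deg_G(v)$ of the balanced valuation (applied to $X = \{v\}$). Hakimi's orientation theorem then produces such a $D$, because the required set inequalities $|\sum_{v \in X} k_v| \le |\partial_G(X)|$ follow (with slack $(r-2)/r$) from the balanced-valuation hypothesis. Second, I look for $g\colon E(G) \to [-(r-2)/2, (r-2)/2]$ with divergence $-\frac{r}{2} k_v$ at every $v$. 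By Hoffman's circulation theorem the existence of such $g$ reduces to the cut inequalities $-\frac{r}{2}\sum_{v \in X} k_v \le \frac{r-2}{2}|\partial_G(X)|$, i.e., exactly $\sum_{v \in X} \omega(v) \ge -|\partial_G(X)|$, which (combined with the complementary $X^{c}$ inequality and the trivial $\sum_v \omega(v) = 0$ from $X = V(G)$) is precisely the balanced-valuation hypothesis. Setting $f(e) := g(e) + r/2$ produces the desired nowhere-zero $r$-flow in $[1, r-1]$.

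The principal obstacle is the backward direction, whose content is routing the hypothesis through two auxiliary existence results (Hakimi's orientation theorem and Hoffman's circulation theorem); the non-trivial observation is that both feasibility conditions collapse to the single inequality defining a balanced valuation, so the theorem becomes a clean translation exercise once these black boxes are in place.
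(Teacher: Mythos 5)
The paper does not actually prove Theorem~\ref{bv}: it is quoted from Jaeger's 1975 paper \cite{Jaeger_bal_val} (in the formulation of \cite{Stef_circ_flow}), so there is no in-paper argument to compare against. Judged on its own, your proof is correct and is essentially the standard proof of Jaeger's theorem via the shift $g=f-\frac r2$. The forward direction is complete. In the backward direction the two black boxes do exactly what you claim: with $a_v=(k_v+\deg_G(v))/2$ one has $\sum_v a_v=|E(G)|$, and Hakimi's condition $\sum_{v\in X}a_v\ge e(G[X])$ is equivalent to $\sum_{v\in X}k_v\ge -|\partial_G(X)|$, which the balanced-valuation hypothesis supplies with the slack $\frac{r-2}{r}$ you note; the feasibility condition for $g$ is really the Gale--Hoffman theorem for flows with prescribed excesses rather than Hoffman's circulation theorem verbatim, but the reduction via an apex vertex is immediate, and since the capacity interval $[-\frac{r-2}{2},\frac{r-2}{2}]$ is symmetric the resulting cut condition is orientation-independent and again collapses to the balanced-valuation inequality. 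Two points are worth making explicit in a write-up: first, the orientation of step~1 must be fixed before step~2 so that the identity $\sum_{e\in E^+(v)}f(e)-\sum_{e\in E^-(v)}f(e)=\bigl(\sum_{e\in E^+(v)}g(e)-\sum_{e\in E^-(v)}g(e)\bigr)+\frac r2\bigl(d_D^+(v)-d_D^-(v)\bigr)$ cancels to zero, which your construction does ensure; second, the congruence $\sum_{v\in X}k_v\equiv|\partial_G(X)| \pmod 2$ needed for an orientation realizing the imbalances $k_v$ is automatic from the local congruences $k_v\equiv\deg_G(v) \pmod 2$, so Hakimi's integrality hypotheses are genuinely met (and, reassuringly, for a bridge this parity clashes with the balanced-valuation bound, so the hypothesis is vacuous exactly when no nowhere-zero flow can exist). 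I see no gaps.
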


If $G$ has a nowhere-zero $r$-flow, then $G$ has always an orientation such that all flow values are positive. 
Thus, if $G$ is cubic, then each vertex has either one or two incoming edges. Hence, $V(G)$ is naturally partitioned 
into two subsets of equal cardinality $V(G) = \ca B \cup \ca W$.  Moreover, we say that $v$ is black (resp.~white) if $v\in\ca B$ (resp. $v\in\ca W$). The balanced valuation $\omega$ of $G$ corresponding to the all-positive 
nowhere-zero $r$-flow $(D,f)$ is defined as follows: $\omega(v)=\frac{r}{r-2}$ if $v$ is black and 
$\omega(v)=-\frac{r}{r-2}$ if $v$ is white. Finally, for $X\subseteq V(G)$ we define $b_X = |X\cap \ca B|$ and $w_X=|X\cap \ca W|$. We call the partition $(\ca B, \ca W)$ of $V(G)$ an $r$-bipartition of $G$, see for example \cite{Flows_bisections} for the study of such partitions in cubic graphs. 

	\begin{lem}\label{lem:asymptotic} \label{lemma}
		Let $i\in\{1,2\}$, and $\{G_n : n\in \N \}$ be a family of cubic class $2$ graphs such that for each $n \ge 1$:
		\begin{itemize}
		\item $G_n$ has a $r_n$-bipartition $(\ca B_n, \ca W_n)$ with $r_n\in(4,5)$;
		\item $G_n$ has a perfect matching $M_n$ with the following properties:
		\begin{itemize}
			\item $G_n$ has the $M_n$-class-$i$-property;
			\item if $ab\in M_n$, then $a\in\ca B_n$ if and only if $b\in \ca W_n$.
		\end{itemize}
		\end{itemize}
	
		If $\lim_{n \to \infty} r_n = 4$, then $\Phi^{(i)}(2t+1)=2+\frac{2}{2t-1}\text{, for every integer }t\ge1$.
		
	\end{lem}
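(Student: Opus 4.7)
The plan is to prove the lemma via matching lower and upper bounds on $\Phi^{(i)}(2t+1)$. For the lower bound I appeal directly to Theorem \ref{teo:1-factor_bipartite}: any $(2t+1)$-regular class $2$ graph, and any class $1$ graph without a perfect matching whose removal leaves a bipartite subgraph, fails the hypothesis of that theorem and therefore has $\phi_c > 2 + \frac{2}{2t-1}$, so $\Phi^{(i)}(2t+1) \geq 2 + \frac{2}{2t-1}$. The real work is to produce a sequence of $(2t+1)$-regular class $i$ graphs whose circular flow numbers tend to $2 + \frac{2}{2t-1}$, and the natural candidates are $H_n := G_n + (2t-2)M_n$, which are $(2t+1)$-regular and class $i$ by the $M_n$-class-$i$-property.

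My approach to the upper bound is to build a balanced valuation of $H_n$ via Theorem \ref{bv}, starting from the one associated with the $r_n$-bipartition $(\ca B_n, \ca W_n)$ of $G_n$. Setting $\alpha_n := \frac{r_n}{r_n-2} \in (5/3, 2)$, the $r_n$-bipartition provides
$$\alpha_n\, |b_X - w_X| \leq |\partial_{G_n}(X)|$$
for every $X \subseteq V(G_n)$. Moreover, since every edge of $M_n$ joins $\ca B_n$ to $\ca W_n$, an edge of $M_n$ lying inside $X$ (or entirely outside $X$) contributes $0$ to $b_X - w_X$, while an edge of $M_n$ crossing $\partial_{G_n}(X)$ contributes $\pm 1$. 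Writing $p_X := |M_n \cap \partial_{G_n}(X)|$, this yields the second estimate $|b_X - w_X| \leq p_X$.

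Combining the two inequalities is then straightforward. Replacing each edge of $M_n$ by $2t-1$ parallel copies gives $|\partial_{H_n}(X)| = |\partial_{G_n}(X)| + (2t-2)\, p_X$, so setting $\alpha := \alpha_n + 2t - 2$ and splitting the left-hand side yields
$$\alpha\, |b_X - w_X| = (2t-2)|b_X - w_X| + \alpha_n|b_X - w_X| \leq (2t-2)\, p_X + |\partial_{G_n}(X)| = |\partial_{H_n}(X)|.$$
Therefore the valuation $\omega \colon V(H_n) \to \R$ taking values $+\alpha$ on $\ca B_n$ and $-\alpha$ on $\ca W_n$ is balanced, and Theorem \ref{bv} (with $k_v = \pm 1$, which has the same parity as $\deg_{H_n}(v) = 2t+1$) turns $\omega$ into a nowhere-zero $r$-flow on $H_n$ with $r = \frac{2\alpha}{\alpha-1}$. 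As $r_n \to 4$ one has $\alpha_n \to 2$, hence $\alpha \to 2t$ and $r \to \frac{4t}{2t-1} = 2 + \frac{2}{2t-1}$, giving the upper bound.

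The step I expect to be the main obstacle is recognising the second estimate $|b_X - w_X| \leq p_X$: it is precisely the hypothesis that $M_n$ joins $\ca B_n$ to $\ca W_n$ that makes the matching edges lying inside and outside $X$ cancel in $b_X - w_X$, so that only the boundary $M_n$-edges matter. Once this cancellation is spotted, the splitting with weights $2t-2$ on the matching bound and $\alpha_n$ on the flow bound is essentially forced, and the hypothesis $r_n \to 4$ is exactly what delivers the target flow number $2 + \frac{2}{2t-1}$.
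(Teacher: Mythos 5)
Your proof is correct and follows essentially the same route as the paper: the same two estimates (the bound $\frac{r_n}{r_n-2}\,|b_X-w_X|\le |\partial_{G_n}(X)|$ coming from the $r_n$-bipartition via Theorem \ref{bv}, and $|b_X-w_X|\le |M_n\cap \partial_{G_n}(X)|$ coming from $M_n$ pairing black with white vertices), combined with the same weights to produce a balanced valuation of $H_n=G_n+(2t-2)M_n$ with constant $\frac{r_n}{r_n-2}+2t-2$, and the same appeal to Theorem \ref{teo:1-factor_bipartite} for the matching lower bound. The one detail you gloss over is that for $i=1$ membership of $H_n$ in $\ca G_{2t+1}$ requires not only that $H_n$ is class $1$ but also that $H_n$ has no perfect matching whose removal leaves a bipartite graph; the paper settles this in one line by observing that $G_n$ is a class $2$ cubic graph and hence has no such $1$-factor.
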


	\begin{proof}
		Fix an integer $t\ge1$ and let $H_n := G_n + (2t-2)M_n$. Since $G_n$ has the $M_n$-class-$i$ property $\{H_n\colon n \in \N\}$ is an infinite family of $(2t+1)$-regular class $i$ graphs.
		
		 By Theorem \ref{bv}, $|\partial_{G_n}(X)|\ge \frac{r_n}{r_n - 2}|b_X-w_X|$, for every $X\subseteq V(G_n)$. Let $Y\subseteq V(H_n)$. Since $M_n$ pairs black and white vertices of $H_n$ we have that $d = |M_n \cap \partial_{H_n}(Y)| \ge |b_Y-w_Y|. $ Therefore, for every $Y\subseteq V(H_n)$, we get the following inequalities:
		 
		 $$ |\partial_{H_n} (Y)| \ge \frac{r_n}{r_n - 2}|b_Y-w_Y| + (2t-2)d \ge (\frac{r_n}{r_n - 2}+2t-2)|b_Y-w_Y|. $$
		
		Hence, $H_n$ has a nowhere-zero $(2+ \frac{2(r_n-2)}{r_n+(2t-3)(r_n-2)})$-flow. Notice that, if $i=1$, then $H_n\in \ca G_{2t+1}$, for every $n$, because $G_n$ is a class $2$ cubic graph, and so it cannot have a $1$-factor whose removal gives rise to a bipartite graph. On the other hand if $i=2$, then $H_n$ is class $2$. Therefore, since the sequence $\{r_n\}_{n\in\N}$ tends to $4$ from above, we have that $$ \Phi^{(i)}(2t+1) \le \lim_{n\to \infty} \Bigl(2+ \frac{2(r_n-2)}{r_n+(2t-3)(r_n-2)}\Bigr) = 2+\frac{2}{2t-1}, $$
		and thus, equality holds from Theorem \ref{teo:1-factor_bipartite}.
	\end{proof}

\subsection{A family of snarks fulfilling the hypothesis of Lemma \ref{lem:asymptotic}}
\subsubsection*{Class $1$ regular graphs}

Consider the family of Flower snarks $\{J_{2n+1}\}_{n\in \N}$, introduced in \cite{Isaacs}.
The Flower snark $J_{2n+1}$ is the non-$3$-edge-colorable cubic graph having: 
\begin{itemize}
	\item vertex set $V(J_{2n+1})= \{a_i,b_i,c_i,d_i\colon i \in \Z_{2n+1}\}$
	\item edge set $E(J_{2n+1})= \{ b_ia_i,b_ic_i,b_id_i,a_ia_{i+1},c_id_{i+1},c_{i+1}d_i\colon i \in \Z_{2n+1}\}$
\end{itemize}

The following Lemma holds true. Since its proof requires some case analysis and lies outside the intent of this section we omit it here and add it in the appendix.

\begin{lem}\label{lem:Flower}
	Let $M$ be a $1$-factor of $J_{2n+1}$. Then $J_{2n+1}+M$ is a class $1$ $4$-regular graph.
\end{lem}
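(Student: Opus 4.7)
The plan is to exhibit, for each perfect matching $M$ of $J_{2n+1}$, a proper $4$-edge-coloring of the $4$-regular multigraph $J_{2n+1}+M$. A natural first attempt uses the two copies of each $M$-edge as two color classes and splits $E(J_{2n+1})\setminus M$ into two further matchings, but this would give a $3$-edge-coloring of $J_{2n+1}$, contradicting the fact that $J_{2n+1}$ is a snark. Hence the coloring must mix doubled $M$-edges with original edges inside each color class, and the doubled spokes must be exploited to absorb the parity obstruction.

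First I would classify $M$ by its restriction to the spokes: for each $i\in\Z_{2n+1}$, exactly one of $b_ia_i,b_ic_i,b_id_i$ lies in $M$, so one has a map $s\colon\Z_{2n+1}\to\{a,c,d\}$ with $S_x:=s^{-1}(x)$. The remaining part of $M$ must perfectly match $\{a_j:j\notin S_a\}$ along the $a$-cycle and the corresponding unsaturated $c$- and $d$-vertices along the $c$-$d$ cycle. Parity forces $|S_a|$ to be odd and forces the arcs of $\Z_{2n+1}\setminus S_a$ between consecutive elements of $S_a$ to have even length; the $c$-$d$ cycle, being of even length $4n+2$, imposes only the weaker constraint that $|S_c|+|S_d|$ is even. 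The cyclic symmetry of $J_{2n+1}$ together with the automorphism swapping $c$- and $d$-vertices then reduce the possibilities for $s$ to a manageable family of canonical patterns.

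The coloring would be built in stages: alternate two of the four colors around the even $c$-$d$ cycle (breaking the alternation at each spoke in $S_c\cup S_d$ where the doubled $M$-edge provides ``color slack''), assign colors to the spokes accordingly, and finally extend to the $a$-cycle. The doubled spokes $b_ia_i$ with $i\in S_a$ each carry two distinct colors, and the odd number of such spokes is precisely what absorbs the odd-cycle parity obstruction that prevents $J_{2n+1}$ itself from being $3$-edge-colorable. The main technical obstacle is verifying that these local assignments glue consistently across every pattern of $s$; I would handle this either by a careful case analysis along the cyclic $s$-pattern, or by induction on $n$, reducing each instance to a smaller Flower snark by contracting an $M$-consistent arc of length two on both cycles, and then transferring the coloring back.
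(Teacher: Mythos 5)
Your structural observations about $M$ are correct (exactly one spoke at each $b_i$, $|S_a|$ odd, even arcs between consecutive spoke-saturated vertices on each cycle), and your second fallback --- induction on $n$ by excising a length-two arc of the index cycle and reducing to $J_{2n-1}$ --- is in fact the route the paper takes. But the proposal stops exactly where the work begins, and both of the steps it defers are genuinely nontrivial. First, you never show that an ``$M$-consistent arc of length two'' \emph{exists}: for the reduction to produce a perfect matching of $J_{2n-1}$ you need an index $j$ such that the two cuts $E_{j,j+1}=\{a_ja_{j+1},c_jd_{j+1},c_{j+1}d_j\}$ and $E_{j+2,j+3}$ meet $M$ in the same way (both disjoint from $M$, or both containing the matching edge of the same one of the three types). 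The paper proves this via a separate claim: encoding which edge of $E_{i,i+1}$ lies in $M$ as a $3$-coloring of the cycle $C_{2n+1}$ with local adjacency restrictions, and then deriving a contradiction from the parity count $2n+1=2m_1+2m_2+m_3$, which forces an odd number of ``type $a$'' cuts. Nothing in your sketch supplies this, and it is not automatic. Second, even once the reduction is made, transferring the coloring back is not routine: one must check that the proper $4$-edge-coloring of $J_{2n-1}+M''$ extends across the two reinserted $4$-vertex blocks for \emph{every} admissible triple of boundary colors $(h_1,h_2,h_3)$, which in the paper requires first ruling out $h_1=h_2=h_3$ (using which spoke of $J_{2n+1}^j$ lies in $M$) and then exhibiting an explicit extension on an auxiliary graph. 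Your sketch asserts the gluing ``must be verified'' but does not do it.

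Your primary strategy (alternate two colors around the even $c$--$d$ cycle, break at spokes in $S_c\cup S_d$, then extend to the $a$-cycle) is too underspecified to evaluate, and the claim that cyclic symmetry plus the $c$--$d$ swap reduces the spoke-patterns $s$ to ``a manageable family of canonical patterns'' is not correct for general $n$: the number of admissible patterns grows without bound, while the symmetry group has order only $O(n)$, so a finite case analysis over canonical patterns cannot cover all $n$. As it stands the proposal is a plausible plan whose two load-bearing lemmas --- existence of a reduction site and extendability of the coloring --- are missing.
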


\begin{teo}\label{teo:prop_Flower}
	The graph $J_{2n+1}$ has a $(4 + \frac{1}{n})$-bipartition $(\ca B_n, \ca W_n)$ and a perfect 
	matching $M_n$ such that:
	\begin{itemize}
		\item $J_{2n+1}$ has the $M_n$-class-$1$-property;
		\item for all $xy\in M_n$, $x\in B_ n$ if and only if $y\in W_n$.
	\end{itemize}
\end{teo}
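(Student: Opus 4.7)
I will define the perfect matching
\[
M_n := \{a_i b_i : i \in \Z_{2n+1}\} \cup \{c_i d_{i+1} : i \in \Z_{2n+1}\},
\]
which clearly covers every vertex exactly once. Lemma \ref{lem:Flower} gives a proper $4$-edge-coloring of $J_{2n+1}+M_n$; by induction on $k\ge 1$, one obtains that $J_{2n+1}+kM_n$ is $(k+3)$-edge-colorable, since given such a coloring, the extra copy of $M_n$ in $J_{2n+1}+(k+1)M_n$ can receive a new color as $M_n$ is a matching. In particular $J_{2n+1}+(2t-2)M_n$ is $(2t+1)$-edge-colorable for every $t\ge 2$, establishing the $M_n$-class-$1$-property.

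\textbf{The bipartition via a flow.} By Theorem \ref{bv} and the subsequent cubic-graph discussion, producing the bipartition $(\ca B_n,\ca W_n)$ amounts to exhibiting a nowhere-zero $(4+\frac{1}{n})$-flow $(D,f)$ whose black/white classes (vertices with, respectively, two and one incoming arcs) pair each edge of $M_n$ bichromatically. I would orient as follows: the $a$-cycle as a directed cycle $a_0\to a_1\to\cdots\to a_{2n}\to a_0$; each $c_i$ as a source on the $c$-$d$ cycle, with $c_i\to d_{i+1}$ and $c_i\to d_{i-1}$; and the spokes $c_ib_i$ and $b_id_i$ as $b_i\to c_i$ and $d_i\to b_i$. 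This forces $c_i$ white and $d_j$ black, so every matching edge $c_id_{i+1}$ is bichromatic. On the $a$-cycle I place $n$ vertices (e.g.\ $a_0,a_2,\dots,a_{2n-2}$) into $\ca B_n$ and the remaining $n+1$ into $\ca W_n$; each $b_i$ is assigned the opposite color of $a_i$, and the spoke $a_ib_i$ is oriented from the white endpoint to the black one. Each $a_ib_i\in M_n$ is then bichromatic as well.

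\textbf{Flow values and the main obstacle.} Writing $m_i=f(c_id_{i+1})$, $n'_i=f(c_id_{i-1})$, $\gamma_i=f(b_ic_i)$, $\delta_i=f(d_ib_i)$, $\beta_i=f(a_ib_i)$ and $\alpha_i=f(a_ia_{i+1})$, Kirchhoff's law at the cycle vertices gives $\gamma_i=m_i+n'_i$ and $\delta_i=m_{i-1}+n'_{i+1}$, while conservation at $b_i$ forces $\delta_i-\gamma_i=1+\frac{1}{n}$ when $a_i$ is black and $\gamma_i-\delta_i=1$ when $a_i$ is white. The cyclic summation $\sum_i\pm\beta_i$ on the $a$-cycle vanishes because $n(1+\frac{1}{n})=(n+1)\cdot 1$, so the system is consistent. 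The main obstacle is to exhibit a solution with all flow values in $[1,3+\frac{1}{n}]$. Exploiting the $\Z_{2n+1}$-symmetry and setting $m_i=n'_i=1$ for every black index $i$ reduces the problem to a small linear system in the white-index variables, which admits a one-parameter family of solutions. Choosing the parameter appropriately then gives, for every $n$, an explicit flow whose values all lie in $[1,3+\frac{1}{n}]$ (for instance, setting $m_1=2$ and propagating via the cycle relations produces such a valid assignment). Theorem \ref{bv} then delivers the required $(4+\frac{1}{n})$-bipartition.
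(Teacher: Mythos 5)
Your overall strategy coincides with the paper's: take the perfect matching consisting of the spokes $a_ib_i$ together with one diagonal family $c_id_{i+1}$ (the paper uses $c_{i+1}d_i$, which is immaterial since Lemma \ref{lem:Flower} applies to every $1$-factor), get the class-$1$ property from Lemma \ref{lem:Flower} by adding one newly coloured copy of $M_n$ per step, and read the bipartition off an explicitly constructed all-positive nowhere-zero $(4+\frac{1}{n})$-flow. Your orientation is correct and does make every edge of $M_n$ bichromatic. Where the two arguments genuinely diverge is in how the flow is produced: the paper writes it as an integer $4$-flow with a single zero-valued edge plus $n$ circuit flows of value $\frac{1}{n}$ all passing through that edge, so that membership of all values in $[1,3+\frac{1}{n}]$ is immediate from the way the circuits are chosen; you instead solve the Kirchhoff system directly.

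That last step is the one real soft spot. The assertion that the linear system ``admits a one-parameter family of solutions'' one of which has all values in $[1,3+\frac{1}{n}]$ is exactly the content of the flow-number claim, and as written it is asserted rather than proved. It does go through: at a black index $i$ the constraints $m_i+n'_i\ge 2$ and $m_i+n'_i+1+\frac{1}{n}\le 3+\frac{1}{n}$ force $m_i=n'_i=1$ and hence $m_{i-1}+n'_{i+1}=3+\frac{1}{n}$; with the black $a$-vertices at $a_0,a_2,\dots,a_{2n-2}$ one may then take $m_{2k+1}=2-\frac{k}{n}$ and $n'_{2k+1}=1+\frac{k}{n}$ for $0\le k\le n-2$, together with $n'_{2n-1}=2-\frac{1}{n}$, $m_{2n-1}=1+\frac{1}{n}$, $m_{2n}=2+\frac{1}{n}$, $n'_{2n}=1$, and one checks that every edge value (including the $a$-cycle values, whose partial sums span an interval of length exactly $2$, so a base value of $1$ keeps them in $[1,3]$) lies in $[1,3+\frac{1}{n}]$. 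You should either carry out this verification explicitly or adopt the paper's ``integer $4$-flow plus $n$ fractional circuits through the unique zero edge'' bookkeeping, which makes the range check automatic.
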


\begin{proof}
	
	We construct explicitly a nowhere-zero $(4+\frac{1}{n})$-flow $(D_n,f_n)$ in $J_{2n+1}$ as sum of an integer $4$-flow $(D,f)$ with exactly one edge having flow value $0$ and $n$ flows $(D'_1,f_1'),\dots,(D_n',f_n')$ having value $\frac{1}{n}$ each on a different circuit.
	
	Define $(D,f)$ on the directed edges of $J_{2n+1}$ as follows, when we write an edge connecting two vertices $u,v$ in the form $uv$ we assume it to be oriented from $u$ to $v$ in the orientation $D$:
	
	\begin{itemize}
		\item $f(a_0b_0)=0$ and $f(b_0c_0)=f(d_0b_0)=2$;
		\item $f(b_ia_i)=f(a_{i+1}b_{i+1})=1$, for all $i\in\{1,3,\dots,2n-1\}$;
		\item $f(b_ic_i)=2$ and $f(b_{i+1}c_{i+1})=3$, for all $i\in\{1,3,\dots,2n-1\}$;
		\item $f(d_ib_i)=3$ and $f(d_{i+1}b_{i+1})=2$, for all $i\in\{1,3,\dots,2n-1\}$;
		\item $f(a_ia_{i+1})=f(c_{i+1}d_i)=1$, for all $i\in \{0,2,\dots,2n\}$;
		\item $f(a_ia_{i+1})=f(c_{i+1}d_i)=2$, for all $i\in\{1,3,\dots,2n-1\}$;
		\item $f(c_id_{i+1})= 1$, for all $i\in \Z_{2n+1}$;
	\end{itemize}
	
	For $j \in \{1,\dots,n \}$ let $(D'_j,f'_j)$ be the flow on the directed circuit 
	$C_j= a_0b_0c_0 d_1\dots d_l c_{l+1} d_{l+2}\dots d_{2j-1} b_{2j-1} a_{2j-1} a_{2j}a_{2j+1}\dots a_0$ (where $l<j$ and $l$ odd), 
	with $f_j'(e)=\frac{1}{n}$ if $e\in C_j$ and $f_j'(e)=0$ otherwise.
	
	The sum $(D,f)+\sum_{i = 1}^{n}(D_i',f_i')$ gives a nowhere-zero $(4+\frac{1}{n})$-flow $(D_n,f_n)$ in $J_{2n+1}$. Let $(\ca B_n,\ca W_n)$ be the bipartition induced by such a flow and consider the $1$-factor $M_n=\{ a_ib_i, c_{i+1}d_i\colon i\in \Z_{2n+1} \}$. Notice that $D_n=D$, and for all $xy\in M_n$: $x\in\ca B_ n$ if and only if $y\in\ca W_n$. By Lemma \ref{lem:Flower}, $J_{2n+1}$ has the $M_n$-class-$1$ property.
\end{proof}

\subsubsection*{Class $2$ regular graphs}
Let $P$ denote the Petersen graph. We recall now the following result, which follows from Theorem 3.1 of \cite{GrundSteff}.
	
\begin{lem}\label{lem:P_class2}
		Let $M_1,\dots,M_k$ be perfect matchings of $P$. Then $P+\sum_{i=1}^{k}M_i$ is a $(k+3)$-regular class 2 graph.
\end{lem}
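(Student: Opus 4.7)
The plan is to prove the statement directly, by supposing for contradiction that $G := P + \sum_{i=1}^k M_i$ admits a proper $(k+3)$-edge-coloring and exploiting the exceptionally rigid structure of the perfect matchings of the Petersen graph.

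First, I would analyse such a hypothetical coloring. With color classes $N_1, \ldots, N_{k+3}$, each $N_j$ is a perfect matching of $G$ (by the standard edge count $|E(G)| = 5(k+3)$), and since parallel edges of $G$ must receive distinct colors, the natural projection $\pi \colon E(G) \to E(P)$ identifying an added edge with its underlying edge of $P$ maps $N_j$ injectively onto a perfect matching $P_j := \pi(N_j)$ of $P$. Counting, for every $e\in E(P)$, the multiplicity $1+|\{i:e\in M_i\}|$ of $e$ in $G$ yields the decomposition equation
\[
\sum_{j=1}^{k+3}\mathbf{1}_{P_j} \;=\; \mathbf{1}_{E(P)} + \sum_{i=1}^{k}\mathbf{1}_{M_i}
\]
in $\mathbb{Z}^{E(P)}$, where $\mathbf{1}_X$ denotes the indicator function of $X\subseteq E(P)$.

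Next, I would invoke (and, if need be, justify) two structural facts about $P$ on which everything hinges: $P$ has exactly six perfect matchings $Q_1,\dots,Q_6$, every edge is contained in exactly two of them, and any two distinct $Q_s,Q_t$ meet in exactly one edge. Two disjoint perfect matchings of $P$ would have as complement a third perfect matching, yielding a $3$-edge-coloring of $P$ and contradicting $\chi'(P)=4$; hence $|Q_s\cap Q_t|\ge 1$ for $s\ne t$, and combined with the double count $\sum_{s<t}|Q_s\cap Q_t|=\sum_{e\in E(P)}\binom{2}{2}=15=\binom{6}{2}$ this forces equality throughout. Consequently the map $e\mapsto\{s,t\colon e\in Q_s\cap Q_t\}$ is a bijection from $E(P)$ onto the set of $2$-element subsets of $\{1,\ldots,6\}$.

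Finally, setting $c_s := |\{j:P_j=Q_s\}|-|\{i:M_i=Q_s\}|\in\mathbb{Z}$, the decomposition equation rewrites as $\sum_{s=1}^{6} c_s\,\mathbf{1}_{Q_s}=\mathbf{1}_{E(P)}$ with $\sum_{s=1}^{6} c_s=(k+3)-k=3$. Evaluating at the unique edge of $Q_s\cap Q_t$ gives $c_s+c_t=1$ for every pair $s\ne t$; comparing two different partners of a fixed index forces all $c_s$ to coincide, whence the common value must be $\tfrac12$, contradicting the integrality of the $c_s$'s. I expect the main obstacle to be the verification of the ``any two perfect matchings of $P$ share exactly one edge'' property; once that structural input is in place, the counting step above closes the argument immediately.
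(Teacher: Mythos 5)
Your proof is correct, but it takes a genuinely different route from the paper: the paper gives no self-contained argument at all and simply derives the lemma from Theorem~3.1 of Gr\"unewald and Steffen \cite{GrundSteff}, a general result on chromatic-index-critical graphs built from the Petersen graph. Your argument is instead a direct, elementary counting proof. The reduction of a hypothetical $(k+3)$-edge-coloring of $P+\sum_i M_i$ to the identity $\sum_{s}c_s\,\mathbf{1}_{Q_s}=\mathbf{1}_{E(P)}$ with integer coefficients is sound (the edge count forces every color class to be a perfect matching, and parallel edges force the projection to be injective on each class), and the final contradiction $c_s+c_t=1$ for all pairs, hence $c_s=\tfrac12$, is clean; note that you do not even need the normalization $\sum_s c_s=3$. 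What your approach buys is self-containedness and a slightly stronger conclusion (no integer combination of perfect matchings of $P$ equals the all-ones vector on $E(P)$, which rules out the coloring for \emph{any} multiset of added perfect matchings); what it costs is the verification of the structural input, namely that $P$ has exactly six perfect matchings each edge lying in exactly two of them. Your double-counting derivation of ``any two meet in exactly one edge'' from those two facts is correct, but the two facts themselves still need justification (e.g.\ each edge lies in exactly two perfect matchings by edge-transitivity together with $6\cdot 5=2\cdot 15$, and the count of six requires a short separate check), so if you write this up you should either prove them or cite a standard reference for the perfect-matching structure of the Petersen graph.
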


\subsubsection*{Construction of $\ca G =\{ G_n \colon n\in\N\}$} \label{construction}


$G_1$: The graph $G_1$ is the Blanu\v sa snark, Figure \ref{Fig:Blanusa}.

$G_{n+1}= G_n \cdot G_1$: The dot product of these two graphs will be carried out as follows. 
If $v\in V(G_1)$, then the vertex $v^{i}\in V(G_n)$ corresponds to the vertex $v$ of the $i$-th copy of $G_1$ that has been added in order to construct $G_{n}$. Consider the bold circuit $C=x_0x_1\dots x_8\subseteq G_1$ as depicted in Figure \ref{Fig:Blanusa_cycles}. Delete the vertices $x_0,x_1$ of $G_1$ and edges $x_4^{n}x_5^{n},x_7^{n}x_8^{n}$ from $G_n$. Perform the dot product $G_n\cdot G_1$ by adding the edges $x_4^nx_8,x_5^ny_0,x_7^ny_1$ and $x_8^nx_2$, where $y_0,y_1$ are vertices of $G_1$ which are not in $C$ and adjacent to $x_0,x_1$ respectively, see Figure \ref{Fig:Blanusa_cycles}. The snark $G_2$ is depicted in Figure \ref{Fig:ind_step}.

\begin{teo}\label{teo:Properties_Gn} Let $n\in\N$ and $G_n\in \ca G$.
	The graph $G_n$ has a $(4 + \frac{1}{n+1})$-bipartition $(\ca B_n, \ca W_n)$ and a perfect 
	matching $M_n$ such that:
	\begin{itemize}
		\item $G_n$ has the $M_n$-class-$2$-property;
		\item for all $xy\in M_n$, $x\in B_ n$ if and only if $y\in W_n$.
	\end{itemize}
\end{teo}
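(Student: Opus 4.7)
The plan is to prove the theorem by induction on $n$, constructing explicitly both the orientation (hence the bipartition) and the matching $M_n$ in tandem, and then invoking Lemma \ref{lem:class 2} to obtain the class-$2$ property.

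For the base case $n=1$, I would mimic the flower-snark construction from Theorem \ref{teo:prop_Flower}: exhibit a nowhere-zero $(4+\tfrac12)$-flow on the Blanu\v{s}a snark $G_1$ as the sum of an integer $4$-flow with a single zero-valued edge and a $\tfrac12$-flow supported on a single circuit that contains the interface vertices $x_0,\ldots,x_8$ of Figure~\ref{Fig:Blanusa_cycles}. The all-positive orientation then provides the bipartition $(\ca B_1,\ca W_1)$, and I would select a perfect matching $M_1$ of $G_1$ that contains $x_0x_1$, avoids $x_4x_5$ and $x_7x_8$, and pairs each black vertex with a white one. The $M_1$-class-$2$ property would be verified directly via a Parity-Lemma argument on the $3$-edge-cuts inherited from $G_1$, reducing a hypothetical proper $(2t+1)$-edge-coloring of $G_1+(2t-2)M_1$ to a proper $3$-edge-coloring of $G_1$ itself and contradicting the snark property.

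For the inductive step, the graph $G_{n+1}=G_n\cdot G_1$ is, by construction, precisely an $(M_n,M_1)$-dot-product: the edges $x_4^nx_5^n,x_7^nx_8^n$ are in $E(G_n)\setminus M_n$ by the inductive choice of $M_n$, and the deleted edge $x_0x_1$ lies in $M_1$. Lemma \ref{lem:class 2} then immediately gives that $M_{n+1}:=(M_n\cup M_1)\setminus\{x_0x_1\}$ is a perfect matching of $G_{n+1}$ with the $M_{n+1}$-class-$2$ property. For the bipartition I would combine the $(4+\tfrac{1}{n+1})$-flow on $G_n$ with the $(4+\tfrac12)$-flow on the new copy of $G_1$: both decompose as an integer $4$-flow plus a small $\tfrac1k$-flow on a circuit passing through the deleted region, and I would transfer the integer parts across the four new edges $x_4^nx_8,\, x_5^ny_0,\, x_7^ny_1,\, x_8^nx_2$ while concatenating the two small circuits into a single longer circuit, to which I would reassign flow value $\tfrac{1}{n+2}$. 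The resulting nowhere-zero $(4+\tfrac{1}{n+2})$-flow would yield $(\ca B_{n+1},\ca W_{n+1})$, and matching--bipartition compatibility would be inherited edgewise since each edge of $M_{n+1}$ lies entirely in $G_n$ or in the new copy of $G_1$ and the flow surgery does not reorient matching edges.

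The main obstacle is precisely the flow-combination step: I must arrange the base $4$-flow on $G_1$, and inductively the one on $G_n$, so that the flow values on the removed edges $x_4^nx_5^n, x_7^nx_8^n$ agree with the flow contributions at the vertices $x_0,x_1$ of the new $G_1$-copy, allowing the integer parts to glue to an integer $4$-flow on $G_{n+1}$ after being rerouted through the four new edges. This entails fixing once and for all a flow pattern on $G_1$ near $C$ that is simultaneously compatible with a matching avoiding $x_4x_5,x_7x_8$ and containing $x_0x_1$, so that the induction closes; the remaining verifications reduce to combinatorial bookkeeping of the kind already illustrated in the proof of Theorem \ref{teo:prop_Flower}.
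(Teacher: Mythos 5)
There are two genuine gaps here, and the first one is structural. A nowhere-zero $(4+\frac{1}{n+1})$-flow requires every edge to carry a value in $[1,\,3+\frac{1}{n+1}]$, so in particular the unique edge of value $0$ in the integer $4$-flow must end up with total value at least $1$. Your plan adds a single circuit of value $\frac{1}{2}$ in the base case and, after ``concatenating the two small circuits into a single longer circuit'', a single circuit of value $\frac{1}{n+2}$ in the inductive step; this leaves the zero edge with value $\frac{1}{n+2}<1$, so the result is not a nowhere-zero flow at all. Putting value $1$ on a single circuit does not rescue this: if the circuit avoided every edge of value $3$ you would produce an integer nowhere-zero $4$-flow on a snark, and if it did not you would exceed the upper bound $3+\frac{1}{n+2}$. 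The paper's construction instead maintains a family of $n+1$ circuits that \emph{all} contain the zero edge (property \ref{P1}) while every value-$3$ edge lies on at most one of them (property \ref{P2}); each circuit carries $\frac{1}{n+1}$, so the zero edge receives total value exactly $1$. The inductive step is then the opposite of concatenation: the unique circuit $C$ containing the path $x_4^n\dots x_8^n$ is \emph{split into two} circuits $\tilde{C}_1,\tilde{C}_2$ via the two internally disjoint directed paths $\tilde{P}_1,\tilde{P}_2$ joining $x_8^{n+1}$ and $x_2^{n+1}$ in the new copy of $G_1-\{x_0,x_1\}$, so the number of circuits through the zero edge grows from $n+1$ to $n+2$ and the flow number drops to $4+\frac{1}{n+2}$. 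This growing circuit family is the actual content of the induction and is absent from your argument.

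The second gap is the base-case class-$2$ verification. Your proposed reduction of a proper $(2t+1)$-edge-coloring of $G_1+(2t-2)M_1$ to a proper $3$-edge-coloring of $G_1$ is invalid: the $2t-1$ parallel edges sitting above distinct matching edges may use different $(2t-1)$-subsets of the colors, so no $3$-edge-coloring of $G_1$ is forced. Indeed Lemma \ref{lem:Flower} shows that the Flower snark plus a perfect matching is class $1$, so ``$G$ is a snark, hence $G+(2t-2)M$ is class $2$'' is simply false. The paper instead uses that the Blanu\v sa snark is an $(N_1,N_2)$-dot-product of two Petersen graphs and combines Lemma \ref{lem:P_class2} (the Petersen graph plus any perfect matchings is class $2$) with Lemma \ref{lem:class 2}. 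Your inductive step for the class-$2$ property via Lemma \ref{lem:class 2} is correct and matches the paper, but the induction cannot start without this Petersen-specific input.
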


\begin{proof}
	First we show that for every $n$ there is a nowhere-zero $(4 + \frac{1}{n+1})$-flow in $G_n$. We argue by induction over $n\in \N$.
	Fix on $G_1$ the $4$-flow $(D_1,f_1)$ as depicted in Figure \ref{Fig:Blanusa}.
	When we write $D_1^{-1}$ we will refer to the orientation constructed by reversing each edge in $D_1$, similarly $D_1^1$ will be the orientation $D_1$. A nowhere-zero $(4 + \frac{1}{2})$-flow in $G_1$ can be constructed by adding $\frac{1}{2}$ along the two directed circuits $C_1,C_2$ depicted in Figure \ref{Fig:Blanusa_cycles}. Indeed they have the following two properties: \begin{enumerate} [label=\textbf{P.\arabic*}]
		\item \label{P1} the unique edge having flow value $0$ belongs to all circuits;
		\item \label{P2} every edge with flow value $3$ belongs to at most one of the circuits.
	\end{enumerate} 
	
	Notice also that $f_1(x_7x_8)=1$ and $f_1(x_4x_5)=2$. Moreover there is a unique circuit in $\{C_1,C_2\}$ containing the path $x_4\dots x_8$ and the other one does not intersect it.
	
	Now we proceed with the inductive step. By the inductive hypothesis there is a $4$-flow $(D_n,f_n)$ in $G_n$ having a unique edge with flow value $0$ and $n+1$ directed circuits $\{C_1,\dots,C_{n+1}\}$ in $D_n$ satisfying properties \ref{P1} and \ref{P2}. 
	It holds $f_n(x_7^nx_8^n)=1$, $f_n(x_4^nx_5^n)=2$. Furthermore, there is a unique circuit $C\in\{C_1,\dots,C_{n+1}\}$ containing the path $\tilde{P}=x_4^n\dots x_8^n$ and such that no other circuit intersects $\tilde{P}$. If $n$ is odd, then $\tilde{P}$ is a directed path in $D_n$, if $n$ is even, then $x_8^nx_7^n\dots x_4^n$ is a directed path in $D_n$.
	
	Let $H_n= G_n-\{x_4^nx_5^n,x_7^{n}x_8^{n}\}$ and $H'=G_1-\{x_0,x_1\}$. Then $G_{n+1}$ is constructed by adding edges $x_4^nx_8,x_5^ny_0,x_7^ny_1$ and $x_8^nx_2$.
	Let $(D_{n+1},f_{n+1})$ be the unique $4$-flow in $G_{n+1}$ such that
	\begin{itemize}
		\item $D_{n+1}|_{H_n}=D_n|_{H_n}$ and $D_{n+1}|_{H'}=D_1^{(-1)^{n}}|_{H'}$;
		\item $f_{n+1}|_{H_n}=f_n|_{H_n}$ and $f_{n+1}|_{H'}=f_1|_{H'}$.
	\end{itemize}

We show that there exists a set of $\tilde{\ca C}$ of $n+2$ circuits satisfying properties \ref{P1} and \ref{P2}. In particular, we are going to construct two circuits out of $C$. First notice that there are exactly two paths $\tilde{P}_1=x_8^{n+1}w_1\dots w_tx_2^{n+1}$ and $\tilde{P}_2=x_8^{n+1}x_7^{n+1}x_6^{n+1}x_5^{n+1}x_4^{n+1}x_3^{n+1}x_2^{n+1}$ in $H'\subseteq G_{n+1}$, which are directed in $D_{n+1}|_{H'}$ and such that $\tilde{P}_1\cap \tilde{P}_2 = x_2^{n+1}x_3^{n+1}$, for some vertices $w_1,\dots,w_t \in V(G_{n+1})$ (see Figure \ref{Fig:ind_step} for an example in the case of $n=1$). In particular, if $n$ is odd, then $P_1$ and $P_2$ are 
both directed from $x_8^{n+1}$ to $x_2^{n+1}$ and vice versa if $n$ is even. We can suppose without loss of generality that $C=v_0v_1\dots v_k \tilde{P}=v_0\dots v_k x_4^n\dots x_8^n$ in $G_n$. Define $\tilde{C}_i$ to be the circuit $v_0\dots v_kx_4^n\tilde{P}_ix_8^n$, $i \in \{1,2\}$. It follows by construction that $C_1$ and $C_2$ are both directed circuits in $D_{n+1}$. Therefore, the family $\tilde{\ca C} = (\ca C \setminus \{C\}) \cup \{\tilde{C}_1,\tilde{C}_2\}$ consists of $n+2$ circuits satisfying properties \ref{P1} and \ref{P2} and so a nowhere-zero $(4+\frac{1}{n+2})$-flow can be constructed in $G_{n+1}$.
	
	Now we show that for every $n$ there is a perfect matching $M_n$ of $G_n$ satisfying the statement. We argue again by induction. Choose as a perfect matching $M_1$ of $G_1$, which is indicated by bold edges in Figure \ref{Fig:Blanusa}. Consider two copies of the Petersen graph $P_1,P_2$ together with a perfect matching $N_i$ of $P_i$, $i \in \{1,2\}$. Recall that $G_1$ is constructed by 
	performing an $(N_1,N_2)$-dot-product $P_1\cdot P_2$. In particular we can choose $N_1$, $N_2$ and perform the dot product in such a way that $M_1 = N_1\cup N_2 \setminus \{x'y'\}$, where $x',y'$ are the vertices we removed from $P_2$ in order to perform the dot product. Therefore, by Lemmas \ref{lem:class 2} and \ref{lem:P_class2}, it follows that $G_1$ has the $M_1$-class-$2$-property. Figure \ref{Fig:Blanusa} shows that the chosen $(4 + \frac{1}{2})$-flow in $G_1$ and the perfect matching $M_1$ are related in the following way: let $\ca B_1 \cup \ca W_1$ be the $(4 + \frac{1}{2})$-bipartition of $V(G_1)$, then for all $xy\in M_1$, $x\in \ca B_1$ if and only if $y\in \ca W_1$. Therefore, the statement is true for $n=1$.	
Notice that $x_0x_1\in M_1$ and that $x_4x_5,x_7x_8\notin M_1$.
	
	For the inductive step we assume that $G_n$ has a perfect matching $M_n$ fulfilling the induction hypothesis and $x_4^nx_5^n,x_7^nx_8^n\notin M_n$. There is a unique perfect matching $M_{n+1}$ of $G_{n+1}$ such that $M_{n+1} \cap E(H_n) = M_n$ and $M_{n+1}\cap E(H')=M_1\setminus \{ x_0x_1 \}$. Thus, by Lemma \ref{lem:class 2}, we get that $G_{n+1}$ has the $M_{n+1}$-class-$2$-property. 
	
Define $\ca B_{n+1} = \ca B_n \cup \ca B$ and $\ca W_{n+1} = \ca W_n \cup \ca W$, where $(\ca B, \ca W)$ is the bipartition induced by $(D_1^{(-1)^n},f_1)$ in $G_1$. The bipartition $(\ca B_{n+1}, \ca W_{n+1})$ of $V(G_{n+1})$ is a 
$(4 + \frac{1}{n+1})$-bipartition. Since both $M_n = M_{n+1} \cap E(H_n)$ and $M_1 \setminus \{ x_0x_1 \} = M_{n+1}\cap E(H')$ pair black and white vertices, it follows that $M_{n+1}$ pairs black and white vertices too. Notice that $x_4^{n+1}x_5^{n+1},x_7^{n+1}x_8^{n+1}\notin M_{n+1}$, this concludes the inductive step.
\end{proof}

From Lemma \ref{lem:asymptotic} and Theorems \ref{teo:prop_Flower} and \ref{teo:Properties_Gn} we deduce the following corollary.

\begin{cor}\label{Cor:inf_class2}
	For every $t\ge 2$ and $i\in \{1,2\}\colon  \Phi^{(i)}(2t+1) = 2 + \frac{2}{2t-1}. $
\end{cor}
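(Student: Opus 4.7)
The plan is to apply Lemma \ref{lem:asymptotic} twice, once for each value of $i \in \{1,2\}$, using the two families of snarks whose properties were worked out in Theorems \ref{teo:prop_Flower} and \ref{teo:Properties_Gn}. The hypotheses of the lemma were designed precisely to match the conclusions of these two theorems, so the corollary should follow with essentially no new work; my job is just to check the boxes.

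For $i=2$, I take the family $\ca G = \{G_n : n\in \N\}$ built in the Construction preceding Theorem \ref{teo:Properties_Gn}. Each $G_n$ is a cubic snark, since it is obtained from the Blanu\v{s}a snark $G_1$ by iterated dot products $G_{n+1} = G_n \cdot G_1$, an operation preserving class $2$ membership among cubic graphs. Theorem \ref{teo:Properties_Gn} endows $G_n$ with a $(4+\tfrac{1}{n+1})$-bipartition $(\ca B_n,\ca W_n)$ and a perfect matching $M_n$ with the $M_n$-class-$2$-property, pairing black vertices with white ones. Setting $r_n = 4+\tfrac{1}{n+1}$, we have $r_n \in (4,5)$ and $r_n \to 4$, so Lemma \ref{lem:asymptotic} (with $i=2$) yields $\Phi^{(2)}(2t+1) = 2+\tfrac{2}{2t-1}$ for every $t\geq 1$, and in particular for every $t\geq 2$.

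For $i=1$, I run the same argument with the Flower snarks $\{J_{2n+1} : n\in \N\}$, which are cubic and class $2$ by construction \cite{Isaacs}. Theorem \ref{teo:prop_Flower} supplies a $(4+\tfrac{1}{n})$-bipartition and a perfect matching $M_n$ with the $M_n$-class-$1$-property and the correct black-white pairing. Since $r_n = 4 + \tfrac{1}{n} \to 4$ from above, Lemma \ref{lem:asymptotic} (with $i=1$) gives $\Phi^{(1)}(2t+1) = 2+\tfrac{2}{2t-1}$ for all $t\geq 1$, and again in particular for $t\geq 2$.

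There is no substantive obstacle at this last step, since Lemma \ref{lem:asymptotic} already packages both the upper bound (via the balanced valuation argument using the bipartition and the extra $(2t-2)$ copies of $M_n$ in the cut) and the lower bound (via Theorem \ref{teo:1-factor_bipartite}, which forces any $(2t+1)$-regular class $2$ graph, or any graph in $\ca G_{2t+1}$, to have circular flow number strictly greater than $2+\tfrac{2}{2t-1}$). The real content of the corollary lies in the constructions carried out in Theorems \ref{teo:prop_Flower} and \ref{teo:Properties_Gn}; the deduction here is a clean two-line invocation of the lemma.
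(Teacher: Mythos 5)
Your proposal is correct and takes exactly the paper's route: the corollary is deduced by applying Lemma \ref{lem:asymptotic} to the Flower snark family via Theorem \ref{teo:prop_Flower} for $i=1$ and to the iterated Blanu\v{s}a dot-product family via Theorem \ref{teo:Properties_Gn} for $i=2$, with no additional argument. (The restriction to $t\ge 2$ in the statement sidesteps the degenerate case $i=1$, $t=1$, where $\ca G_3$ is empty; your ``in particular for $t\ge 2$'' lands on the right conclusion.)
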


	\begin{figure}
		\centering
		\includegraphics[scale=0.7]{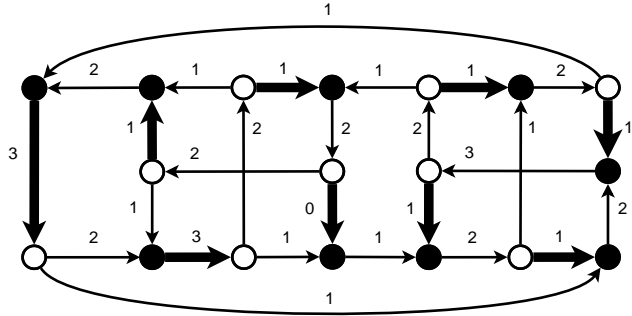}
		\caption{A $4$-flow in the Blanu\v sa snark $G_1$ having just one edge with flow value $0$. The perfect matching consisting of all bold edges pairs black vertices with white vertices.}\label{Fig:Blanusa}
	\end{figure}

	\begin{figure}
		\centering
		\includegraphics[scale=0.7]{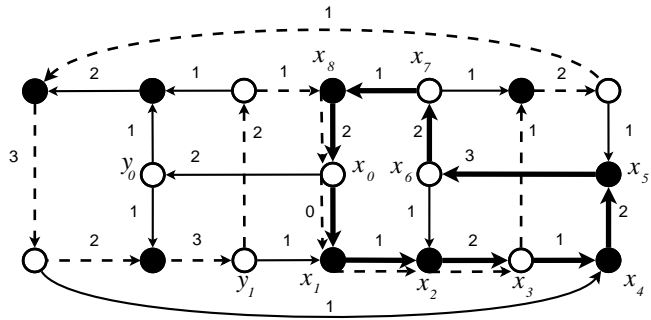}
		\caption{A nowhere-zero $(4+\frac{1}{2})$-flow in the Blanu\v sa snark $G_1$ can be constructed by adding $\frac{1}{2}$ along the bold and dotted circuits.}\label{Fig:Blanusa_cycles}
	\end{figure}

	\begin{figure}
		\centering
		\includegraphics[scale=0.7]{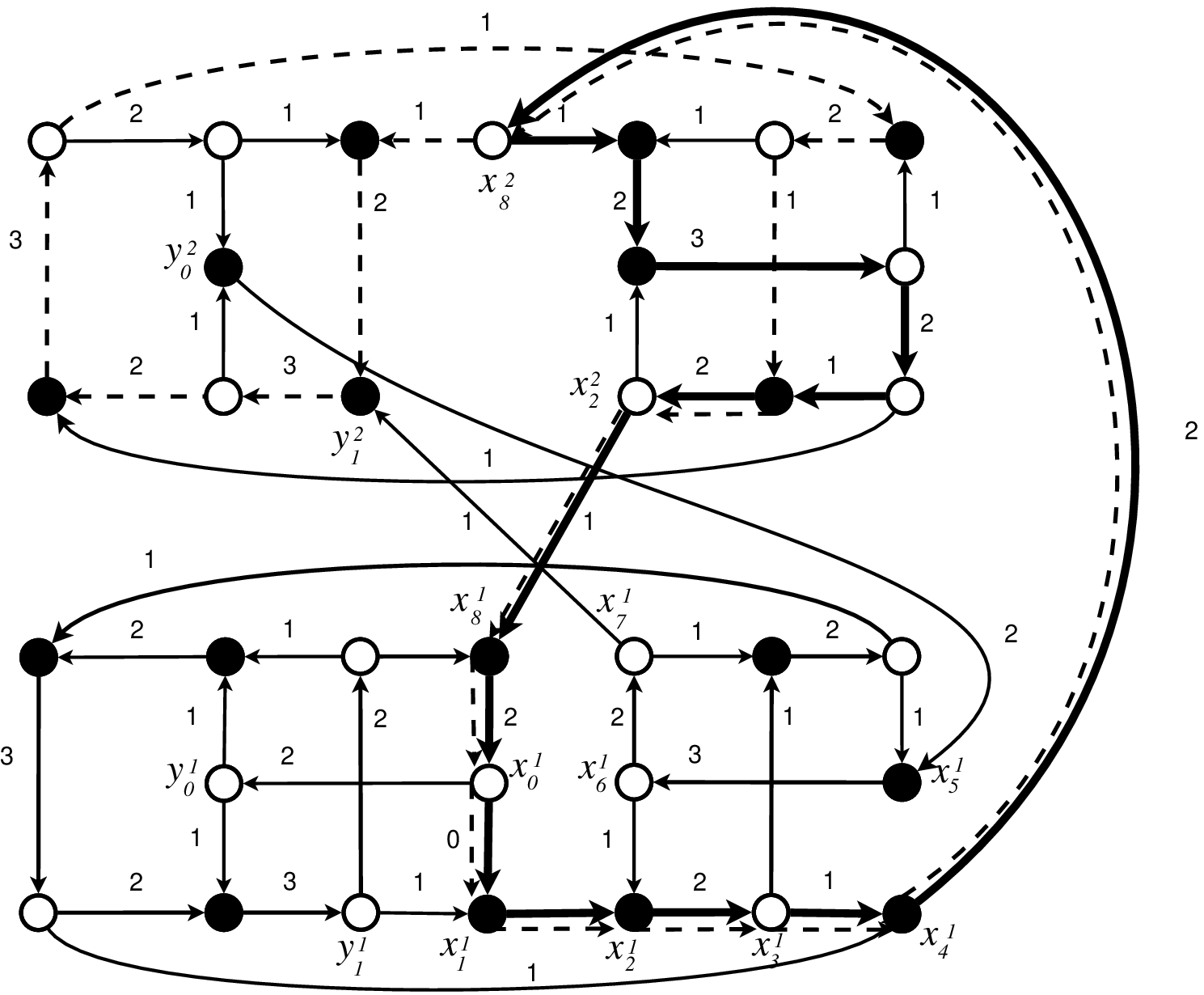}
		\caption{Construction of two more circuits (dotted and bold ones) in $G_2$.}\label{Fig:ind_step}
	\end{figure}

\section{Counterexamples to Conjecture \ref{conj:Stef_class1}}

In \cite{Zhang1} Jaeger's Circular Flow Conjecture \cite{Jaeger} is disproved. For each $p \geq 3$
they constructed a $4p$-edge-connected graph which does not admit a nowhere-zero $(2 + \frac{1}{p})$-flow. 
We will use this family of graphs to construct $(2k+1)$-regular class 1 graphs which do not admit a $(2 + \frac{2}{k})$-flow, for suitable values of $k$. 
We start with summarizing the construction of \cite{Zhang1}. 

\begin{constr}
	Let $p\ge 3$ be an integer and let $\{v_1,v_2,\dots,v_{4p}\}$ be the vertex set of the complete graph $K_{4p}$.
	
	\begin{itemize}
		\item[i.] Construct the graph $G_1$ by adding an additional set of edges $T$ such that $V(T)=\{v_1,v_2,\dots,v_{3(p-1)}\}$ and each component of the edge-induced subgraph $G_1[T]$ is a triangle.
		
		\item[ii.] Construct the graph $G_2$ from $G_1$ by adding two new vertices $z_1$ and $z_2$, adding one edge $z_1z_2$, adding $p-2$ parallel edges connecting $v_{4p}$ and $z_i$ for both $i \in \{1,2\}$, and adding one edge $v_iz_j$ for each $3p-2\le i \le 4p-1$ and $j \in \{1,2\}$.
		
		\item[iii.] Consider $4p+1$ copies $G_2^1,\dots, G_2^{4p+1}$ of $G_2$. If $v\in V(G_2)$, then we write $v^i$ to refer to the vertex $v$ of the $i$-th copy of $G_2$. Construct the graph $M_p$ the following way. For every $i\in \{1, \dots, 4p+1\}$ 
identify $z_2^i$ with $z_1^{i+1}$ and call this new vertex $c_{i+1}$, where we take sums modulo $4p+1$. Finally add a new vertex $w$ and all edges of the form $wc_i$ for all $i\in \{1, \dots,  4p+1\}$.
	\end{itemize}	
\end{constr}

For every $i \in \{1, \dots,  4p+1\}$ we have 
$d_{M_p}(v_{4p}^i) = 4p-1 + 2(p-2) = 6p - 5$, $d_{M_p}(c_i) = 2(p-2+ 4p-1 - (3p-2) + 1) + 3 = 4p+3$, and
all other vertices have degree $4p + 1$.

\begin{teo}[\cite{Zhang1}]
	For all $p \geq 3\colon \phi_c(M_p)>2+\frac{1}{p}.$
\end{teo}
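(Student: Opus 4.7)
The plan is to suppose for contradiction that $M_p$ admits a nowhere-zero $(2+\frac{1}{p})$-flow and derive a contradiction via Theorem~\ref{bv}. With $r = \frac{2p+1}{p}$ we have $\frac{r}{r-2} = 2p+1$, and since every vertex of $M_p$ has odd degree such a flow corresponds to an assignment $\omega(v) = (2p+1)k_v$ with each $k_v$ an odd integer, satisfying
\[
\Bigl|\sum_{v \in X} k_v\Bigr| \le \frac{|\partial_{M_p}(X)|}{2p+1}
\qquad \text{for every } X \subseteq V(M_p).
\]
Every degree in $M_p$ is strictly smaller than $3(2p+1)$, so the inequality applied to $X = \{v\}$ forces $|k_v| \le 2$, and oddness then pins $|k_v| = 1$ for every vertex. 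This yields a bipartition $V(M_p) = \mathcal{B} \sqcup \mathcal{W}$ with $k_v = +1$ on $\mathcal{B}$ and $k_v = -1$ on $\mathcal{W}$, and the displayed condition becomes a balanced-bipartition inequality.

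Next I would apply this inequality to the interior $X_i := V(G_2^i) \setminus \{c_i, c_{i+1}\}$ of each of the $4p+1$ copies. Since $G_2^i$ is attached to the rest of $M_p$ only through $z_1^i = c_i$ and $z_2^i = c_{i+1}$, the edges leaving $X_i$ are precisely the $2p$ edges from $z_1^i$ and the $2p$ edges from $z_2^i$ that land in $\{v_1^i, \ldots, v_{4p}^i\}$, giving $|\partial_{M_p}(X_i)| = 4p$, while $|X_i| = 4p$ is even. The displayed inequality together with parity forces $\sum_{v \in X_i} k_v = 0$, so every interior holds exactly $2p$ vertices of each color. Combined with $k_w, k_{c_i} \in \{\pm 1\}$, the global identity $\sum_v k_v = 0$ (taking $X = V$) collapses to
\[
k_w + \sum_{i=1}^{4p+1} k_{c_i} = 0.
\]

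The heart of the argument, and the main obstacle, is to use the internal combinatorics of a single copy $G_2^i$ to force a rigid local relation between $k_{c_i}$ and $k_{c_{i+1}}$. The ingredients of a copy---the $p-1$ doubled triangles on $\{v_1^i, \ldots, v_{3(p-1)}^i\}$, the complete graph on $\{v_1^i, \ldots, v_{4p}^i\}$, the $2(p-2)$ parallel edges from $v_{4p}^i$ to $\{z_1^i, z_2^i\}$, and the single edges $v_j^i z_\ell^i$ for $3p-2 \le j \le 4p-1$---are engineered in \cite{Zhang1} so that the bipartition inequality applied to well-chosen subsets of $V(G_2^i)$ (a single triangle; $\{v_{4p}^i, z_1^i, z_2^i\}$; various subsets of $\{v_{3p-2}^i, \ldots, v_{4p}^i\}$), together with the interior-balance constraint from the previous step, jointly rule out the pairs $(k_{c_i}, k_{c_{i+1}}) = (+1,+1)$ and $(-1,-1)$, enforcing $k_{c_i} = -k_{c_{i+1}}$. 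Granting this local rigidity, the finish is immediate: propagating $k_{c_i} = -k_{c_{i+1}}$ around the identification cycle $c_1 \to c_2 \to \cdots \to c_{4p+1} \to c_1$ of odd length $4p+1$ is impossible, contradicting the displayed equation. Carrying out the case analysis that establishes this local rigidity and checking consistency of sign conventions is the technical core of \cite{Zhang1}.
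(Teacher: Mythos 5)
The paper does not prove this statement at all: it is quoted from \cite{Zhang1} and used as a black box, so there is no in-paper argument to compare yours against. Judged on its own, your proposal has a genuine gap. The reduction via Theorem \ref{bv} is set up correctly: with $r = 2+\frac{1}{p}$ one gets $\frac{r}{r-2}=2p+1$, every degree of $M_p$ is odd and less than $3(2p+1)$, so indeed $k_v=\pm1$ for every vertex; the cut around the interior $X_i$ of a copy has exactly $4p$ edges, so $|\sum_{v\in X_i}k_v|\le \frac{4p}{2p+1}<2$ and parity forces the interior sum to vanish. That bookkeeping is all fine. But the step you yourself call the heart of the argument --- that the internal structure of a single copy forces $k_{c_i}=-k_{c_{i+1}}$ --- is the entire content of the theorem, and you do not prove it: you assert that inequalities on ``well-chosen subsets'' such as a triangle or $\{v_{4p}^i,z_1^i,z_2^i\}$ ``jointly rule out'' the pairs $(+1,+1)$ and $(-1,-1)$, and then explicitly defer the case analysis to \cite{Zhang1}. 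A proof whose decisive step is a citation of the theorem being proved has not proved anything.

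Moreover, it is doubtful that the small subsets you name can carry the load. For a triangle component of $T$ the relevant cut has $12p-9$ edges, so the balanced-valuation inequality only gives $|\sum k_v|\le\frac{12p-9}{2p+1}$, which is at least $3$ for all $p\ge 2$ and is therefore satisfied by \emph{every} $\pm1$ assignment on three vertices; similar slack occurs for the other small sets you list. The actual argument in \cite{Zhang1} is not a handful of subset inequalities but a global counting and parity argument about mod $(2p+1)$-orientations of $K_{4p}+T$ (this is where the added triangles earn their keep), which is then transferred to a constraint at $z_1,z_2$ and propagated around the odd cycle $c_1,\dots,c_{4p+1}$. Your closing odd-cycle step is the right finish, but without an actual proof of the local rigidity it rests on nothing. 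To make this self-contained you would have to reproduce the orientation-counting lemma of \cite{Zhang1}, or exhibit explicit sets $X$ on which the inequality genuinely fails; otherwise the honest course is to cite the result, as the paper does.
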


Let $k = 2p$. The graph $M_p$ defined in previous section does not have a nowhere-zero $(2+\frac{2}{k})$-flow. We will use 
$M_p$ in order to construct a $(2k+1)$-regular graph of class $1$ which does not admit a nowhere-zero $(2+\frac{2}{k})$-flow, thus disproving Conjecture \ref{conj:Stef_class1}.

We consider odd integers $p\ge3$, say $p = 2t+1$.

We say that an expansion of a vertex $v$ of a graph $G$ to a graph $K$ is obtained by the replacement of $v$ by $K$ and by 
adding $d_G(v)$ edges with one end in $V(G-v)$ and the other end in $V(K)$.

\begin{prop}
	Let $H$ be a graph obtained by expanding a vertex of $G$. Then $\phi_{c}(H)\ge \phi_{c}(G).$
\end{prop}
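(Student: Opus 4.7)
The plan is to contract $K$ back to a single vertex and show that any nowhere-zero $r$-flow on $H$ pushes forward to a nowhere-zero $r$-flow on $G$; setting $r=\phi_c(H)$ then yields $\phi_c(G)\le\phi_c(H)$. If $H$ has a bridge, $\phi_c(H)$ is infinite and the inequality is vacuous, so I only need to treat the case where $H$ actually admits some nowhere-zero $r$-flow.

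First I would set up a bijection between $E(G)$ and the set of edges of $H$ that do not lie inside $K$: edges of $G$ with both endpoints in $V(G-v)$ correspond to themselves in $H$, while each of the $d_G(v)$ edges of $G$ incident to $v$ corresponds to exactly one of the boundary edges of $K$ in $H$. Given a nowhere-zero $r$-flow $(D,f)$ on $H$, I would define an orientation $D'$ of $G$ and a function $f'\colon E(G)\to[1,r-1]$ by transporting the orientation and value across this bijection, with the convention that on a boundary edge the endpoint lying in $V(K)$ is replaced by $v$.

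Next I would verify Kirchhoff's law at $v$ by summing the conservation equation $\sum_{e\in E^+_H(u)}f(e)=\sum_{e\in E^-_H(u)}f(e)$ over all $u\in V(K)$. Each edge internal to $K$ appears exactly once on each side of the summed identity and hence cancels, leaving precisely the signed contributions of the boundary edges of $K$; by construction these match the signed contributions at $v$ in $(D',f')$, so conservation holds. Conservation at the remaining vertices of $G$ is inherited verbatim from $(D,f)$, and since every value of $f$ lies in $[1,r-1]$ so does every value of $f'$. Hence $(D',f')$ is a nowhere-zero $r$-flow on $G$, giving $\phi_c(G)\le r=\phi_c(H)$.

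The main obstacle is conceptual rather than computational: one must be comfortable with flows being preserved under contraction of a (possibly disconnected) subgraph, and careful with the bookkeeping when the expansion lets several edges joining $V(G-v)$ and $V(K)$ share an endpoint in $V(K)$ or when $K$ itself is disconnected. Neither scenario affects the cancellation of internal $K$-edges in the summed conservation identity, but the edge bijection must be fixed in advance so that the transported orientation at $v$ is unambiguous.
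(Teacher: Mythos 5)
The paper states this proposition without any proof, treating it as the standard fact that contracting the expanded subgraph $K$ back to the single vertex $v$ carries any nowhere-zero $r$-flow of $H$ to one of $G$. Your argument is precisely that standard justification and it is correct: you dispose of the bridged case where $\phi_c(H)$ is infinite, fix the edge bijection so the push-forward is well defined, and verify conservation at $v$ by summing the Kirchhoff identities over $V(K)$ so that the internal edges of $K$ cancel, which gives $\phi_c(G)\le\phi_c(H)$.
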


\subsection*{Construction of $M_p'$}

The copy of $K_{4p}$ which is used in the construction of $G_2^i$ is denoted by $K_{4p}^i$.
Construct the graph $M_p'$ by expanding each vertex $v_{4p}^i $ of $ K_{4p}^i$ in $M_p$ to a vertex $x^{i}$ of degree $4p+1$
and $p-3$ divalent vertices, where $x^{i}$ is adjacent to every vertex of $V(K_{4p}^i) \setminus \{v_{4p}^i\}$ and to $c_i$ and $c_{i+1}$ and each divalent vertex is adjacent to both $c_i$ and $c_{i+1}$. After that suppress the divalent vertices. Note that the construction can also be seen as a edge splitting at $v_{4p}^i$. We have $d_{M_p'}(c_i) = 4p+3$ for all $i \in \{1, \dots, 4p+1\}$
and all other vertices of $M_p'$ have degree $4p+1$. Notice that $M'_p$ remains a bridgless graph.

\begin{lem}\label{lem:M_p'}
The graph $M_p'$ admits a $(4p+1)$-edge-coloring $c$ such that for all $i \in \{0, \dots, 4p\}$ and all $v \in V(M_p'):$
$|c^{-1}(i)\cap \partial (v)|$ is odd. Furthermore, $\phi_c(M_p') > 2+\frac{1}{p}$.
\end{lem}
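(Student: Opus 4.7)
The bound $\phi_c(M_p') > 2 + \frac{1}{p}$ is immediate. The graph $M_p'$ is obtained from $M_p$ by expanding each vertex $v_{4p}^i$ to a $(p-2)$-vertex graph, namely the vertex $x^i$ of degree $4p+1$ together with $p-3$ divalent vertices, and then suppressing those divalent vertices. Suppression of a divalent vertex preserves the circular flow number, and by the proposition on expansions stated above, $\phi_c(M_p') \ge \phi_c(M_p)$. Combining this with the theorem of Han, Li, Wu, and Zhang yields the bound.

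The substantive part is the construction of the $(4p+1)$-edge-coloring $c$. A parity analysis of the degree sequence forces the coloring to be very rigid: at each vertex of degree $4p+1$ (every vertex of $M_p'$ other than the $c_i$) each of the $4p+1$ colors appears on exactly one incident edge, so the coloring is proper at these vertices; at each $c_i$ of degree $4p+3$, the $4p+1$ odd multiplicities summing to $4p+3$ force exactly one color to appear on three incident edges and the remaining $4p$ colors to appear on one edge each.

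My plan is to build $c$ block by block. Let $B_i$ denote the subgraph of $M_p'$ on the vertex set $\{v_1^i,\ldots,v_{4p-1}^i, x^i, c_i, c_{i+1}\}$ consisting of all edges of $K_{4p}^i \cup T^i$ together with the edges joining the $v_j^i$ (for $3p-2 \le j \le 4p-1$) and $x^i$ to $c_i$ and $c_{i+1}$. Inside $B_i$ the block vertices have degree $4p+1$ and $c_i, c_{i+1}$ have degree $p+3$, and I aim to show that $B_i$ admits a proper $(4p+1)$-edge-coloring, starting from a proper $(4p-1)$-edge-coloring of $K_{4p}^i$ (with colors $0,\ldots,4p-2$, which exists since $K_{4p}$ is class $1$) and extending through the triangle edges of $T^i$ and the edges to $c_i, c_{i+1}$ using the two new colors $4p-1, 4p$ together with suitable Kempe-chain exchanges. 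Once the blocks are each colored, the remaining (hub) edges are the $p-2$ parallel edges of each bundle $c_ic_{i+1}$ and the edges $wc_i$. Since $p$ is odd, $p-3$ of the edges in each bundle can be grouped into $(p-3)/2$ pairs and colored so that both edges in each pair receive the same color (which does not disturb parity at $c_i$ or $c_{i+1}$ modulo $2$); the single remaining edge of each bundle together with the edge $wc_i$ are then used to create the unique triple-color at $c_i$ and to give the edges at $w$ all $4p+1$ colors.

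The main obstacle is the compatibility condition at each $c_i$. After coloring the two adjacent blocks and the paired bundle edges, one has fixed at $c_i$ the color multiset coming from $2(p+3) = 2p+6$ block edges and an even number of paired bundle edges; the $2p-3$ remaining hub edges at $c_i$ (the unpaired bundle edges and $wc_i$) must then be colored so that one color appears three times at $c_i$ and all others appear once, and simultaneously the colors of $wc_1,\ldots,wc_{4p+1}$ form a bijection with $\{0,\ldots,4p\}$. This is a global matching-and-parity problem across the cyclic structure of the $c_i$, which I would resolve by using the residual freedoms in the choice of the paired colors on the bundles, in the ``repeated color'' at each $c_i$, and in permutations of the colors inside each block via Kempe chains.
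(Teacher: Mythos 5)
The flow-number bound and your opening parity analysis are correct and agree with the paper: every vertex of degree $4p+1$ must see each colour exactly once, and at each $c_i$ (degree $4p+3$, with $4p+1$ colours all of odd multiplicity) exactly one colour must appear three times and every other colour exactly once. However, one concrete step of your plan contradicts this very constraint. You propose to colour $(p-3)/2$ pairs of parallel $c_ic_{i+1}$ edges monochromatically. A monochromatic pair contributes $2$ to the multiplicity of its colour at $c_i$; for that multiplicity to be odd the colour must occur an odd number of further times at $c_i$, hence at least once more, hence with total multiplicity at least $3$. Since exactly one colour may have multiplicity $3$ and none may have multiplicity $5$, at most one such pair can be incident with $c_i$, whereas your scheme places $p-3$ of them there (counting both bundles). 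For every odd $p\ge 5$ this is impossible. In the paper the $p-2$ parallel edges of each bundle receive $p-2$ pairwise \emph{distinct} colours, prescribed by an explicit modular pattern.

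Beyond this, the two genuinely hard steps are left as intentions rather than arguments. First, the proper $(4p+1)$-edge-colouring of each block is not automatic: the block is a multigraph (the triangles of $T$ duplicate edges of $K_{4p}$), so Vizing/Shannon only guarantee $\Delta+\mu=4p+2$ colours, and the existence of a $(4p+1)$-colouring has to be exhibited. The paper does this explicitly, starting from the rotational $1$-factorization of $K_{4p}$ and performing two even-circuit recolourings that absorb the triangle edges and the edges to $c_i,c_{i+1}$ using exactly two new colours. Second, the ``global matching-and-parity problem'' across the cycle of the $c_i$'s and at $w$ is precisely where the difficulty of the lemma lies, and announcing that you ``would resolve it using residual freedoms'' does not constitute a proof. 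The paper resolves it by arranging, after a permutation of colours, that block $i$ delivers the colour set $i+\{1,3,\dots,4t+7\}$ to $c_i$, that the bundle $c_ic_{i+1}$ carries the colours $i+\{4t+8,4t+10,\dots,8t+4\}$, and that $wc_i$ gets colour $i+4t+7$, all modulo $4p+1$; with these choices the unique triple colour at each $c_i$ and the rainbow at $w$ come out automatically. As written, your proposal is a programme rather than a proof, and one of its announced steps cannot be carried out.
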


\begin{proof}
	All operations performed in order to construct $M_p'$ do not decrease the circular flow number of graphs. Thus $\phi_c(M_p')\ge \phi_c(M_p) >2+\frac{1}{p}$.
	
	Now we show that $M'_p$ can be colored using $8t+5 = 4p+1$ colors in such a way that every vertex sees each color an odd number of times. We say that a vertex $v$ sees a color $i$, if there is an edge $e$ which is incident to $v$ and 
	$c(e) = i$. 
	
	Each copy $G_1^i$ can be constructed by considering the complete graph $K_{4p}$ with vertex set $\{0,1,2,\dots,4p-2,\infty\}$ and adding the edges of all following triangles:
	
	\begin{itemize}
		\item $(t+2+j),(t+3+j),(t+4+j)$ for every $j\in \{0,3,6,9,\dots,3(t-1)\}$;
		\item $-(t+2+j),-(t+3+j),-(t+4+j)$ for every $j\in \{0,3,6,9,\dots,3(t-1)\}$.
	\end{itemize}
	
	Consider the following $1$-factorization of $K_{4p}$.
	Let the edges of color $0$ be all edges of the set $M_0 = \{ 0\infty \}\cup \{ -ii\colon i\in \Z_{8t+3} \}$ and the edges of color $j\in \{0,1,\dots, 8t-2\}$ be all edges of the set $M_j = M_0+j = \{j\infty\}\cup\{(-i+j)(i+j)\colon i\in \Z_{8t+3}\}$. We can color this way all copies $K_{4p}^i$ of $K_{4p}$ inside $G_1^i$. Notice that we have used $8t+3=4p-1$ colors so far.
	
	\begin{figure}
		\includegraphics[scale=0.4]{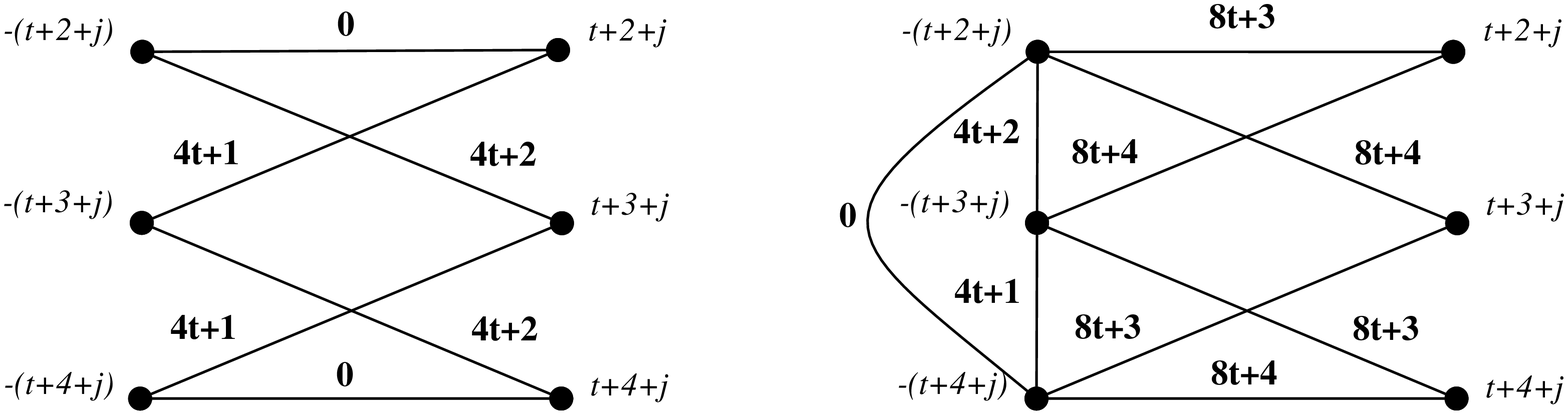}
		\caption{Color the added triangles using colors of the selected circuit. Color the selected circuit with two new colors. Colors are depicted in bold.}\label{Fig:triangles}
	\end{figure}
\begin{figure}
	\includegraphics[scale=0.5]{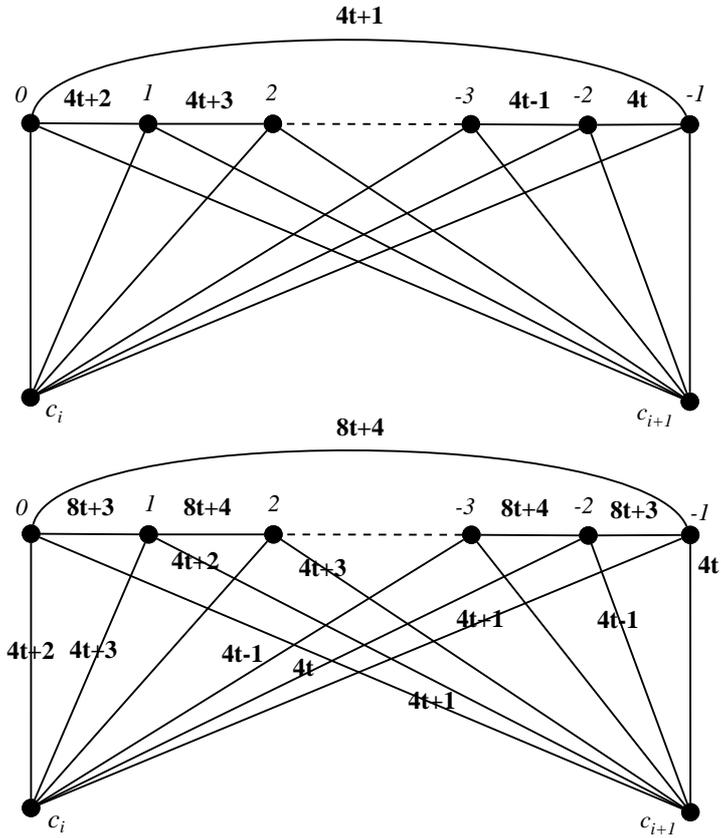}
	\caption{Assign to uncolored edges colors of the selected even circuit and assign to the edges of the selected circuit two new colors. Colors are depicted in bold.}\label{Fig:even_cycle}
\end{figure}
	
Consider the even circuits $ t+2+j,-(t+2+j),t+3+j,-(t+4+j),t+4+j,-(t+3+j),t+2+j$ for every $j\in\{0,3,6,9,\dots, 3(t-1)\}$ inside $G_1^i$. We perform the operation in Figure \ref{Fig:triangles} in order to color all triangles $(t+2+j),(t+3+j),(t+4+j)$ and $-(t+2+j),-(t+3+j),-(t+4+j)$ using two more colors $8t+3$ and $8t+4$.
	
	Consider the even circuit $C=0,1,2,\dots,t+1,\infty,-(t+1),-t,\dots,-2,-1$ inside $G_1^i$. Notice that these are all vertices of $K_{4p}^i$ in $G_1^i$ that are connected with both $c_i$ and $c_{i+1}$. Moreover, there are no two edges of $C$ belonging to the same color class. First assign colors $8t+3$ and $8t+4$ to the edges of $C$ alternately. This way we can assign the previous colors of the edges of $C$ to edges of the type $c_iv$ and $c_{i+1}v$, with $v\in V(C)$, in such a way that both $c_i$ and $c_{i+1}$ see different colors (notice that they see the very same set of colors), see Figure \ref{Fig:even_cycle}. Since the length of $C$ is $2t+4$, up to a permutation of colors, we can suppose that $c_i$ receives colors $i+\{1,3,5,7,\dots, 4t+7\} = i + \{2j+1\colon j=0,1,2,\dots,2t+3\}$ from the copy $G_1^i$, where now we are doing sums modulo $8t+5 = 4p+1$.
	
At this point, every vertex not in $\{w\} \cup \{c_i : i \in \{1, \dots, 4p+1\}\}$ sees every color exactly once.
	
	For every $i\in \{1,\dots,8t+5\}$, color all $p-2$ parallel edges connecting $c_i$ with $c_{i+1}$ with colors $i+\{4t+8,4t+10,\dots,8t+2,8t+4\}$, where sums are taken modulo $8t+5$ (notice that these are exactly $2t-1 (=p-2)$ colors). Finally, color $c_iw$ with color $i+4t+7$ modulo $8t+5$. The central vertex $w$ sees each color exactly once, whereas the vertex $c_i$ sees all colors once but for $i+4t+7$ which is seen three times.
\end{proof}

At this point we are going to further modify $M_p'$ in order to obtain a $(4p+1)$-regular graph of class $1$.

\subsection*{Construction of $\tilde{M_p}$}

Consider the graph $M_p'$. By Lemma \ref{lem:M_p'}, there is a $(4p+1)$-edge-coloring $c$ such that $|c^{-1}(i)\cap \partial (v)|$ is odd for every color $i \in \{0, \dots, 4p\}$ and vertex $v$. Construct $\tilde{M_p}$  by expanding each $c_i$ into a vertex of degree $4p+1$ that receives all colors and into a vertex of degree $2$ that receives the same color from both its adjacent edges. Finally suppress all such vertices of degree $2$.

\begin{teo}
	$\tilde{M_p}$ is a $(4p+1)$-regular class $1$ graph such that $\phi_c(\tilde{M_p})> 2+\frac{1}{p}$.
\end{teo}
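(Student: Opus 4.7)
The plan is to verify two independent properties of $\tilde{M_p}$: that it is $(4p+1)$-regular and of class $1$, and that its circular flow number strictly exceeds $2+\frac{1}{p}$. For the class $1$ property, I would first extract from Lemma \ref{lem:M_p'} and a degree count an explicit description of the coloring $c$ on $M_p'$. The non-$c_i$ vertices have degree $4p+1$, so seeing each of the $4p+1$ colors an odd number of times forces each color to appear \emph{exactly} once at such a vertex. Each $c_i$ has degree $4p+3$, so exactly one color appears three times at $c_i$ and every other color appears once; the construction in the proof of Lemma \ref{lem:M_p'} identifies this triple color at $c_i$ as $i+4t+7 \pmod{8t+5}$. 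The expansion of $c_i$ then splits its incident edges into $4p+1$ edges carrying pairwise distinct colors (routed to $c_i'$) and $2$ edges both carrying the triple color (routed to the divalent vertex $z_i$). Suppressing $z_i$ merges the latter two into a single edge between their former far endpoints; this is consistent with $c$ because, by the degree count, each such far endpoint had only one edge of the triple color in $M_p'$. Hence $c$ descends to a proper $(4p+1)$-edge-coloring of $\tilde{M_p}$, which is class $1$ since the graph is $(4p+1)$-regular by construction.

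For the flow bound I would observe that $\tilde{M_p}$ is obtained from $M_p'$ by a sequence of vertex expansions (each $c_i$ replaced by $\{c_i',z_i\}$) followed by suppressions of the divalent vertices $z_i$. By the proposition stated immediately before the construction of $M_p'$, each vertex expansion cannot decrease the circular flow number. Suppression of a divalent vertex preserves it, since any nowhere-zero $r$-flow lifts or descends via the forced equality of flow values on the two edges incident to the suppressed vertex. Chaining these operations gives $\phi_c(\tilde{M_p}) \ge \phi_c(M_p') > 2+\frac{1}{p}$, the last inequality being part of Lemma \ref{lem:M_p'}. Writing $k = 2p$, this exhibits a $(4p+1)$-regular class $1$ graph with circular flow number strictly greater than $2+\frac{2}{k}$, disproving Conjecture \ref{conj:Stef_class1} (and hence also Conjecture \ref{conj:Stef_r-graph}).

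The main obstacle is the bookkeeping in the first paragraph: one must make sure that, for each $i$, the two edges routed through $z_i$ are in fact two of the three $c_i$-incident edges of the triple color, and that after suppression no color conflict arises at their far endpoints. Both facts follow from the odd-multiplicity property of $c$ in Lemma \ref{lem:M_p'} together with the degree analysis above, but a careless description of the expansion could easily introduce a parity or multiplicity error. Once this compatibility is pinned down, the rest reduces to invoking the already-established properties of $M_p'$ and the flow-monotonicity of expansion and suppression.
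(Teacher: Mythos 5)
Your argument is correct and follows essentially the same route as the paper's (very terse) proof: the $(4p+1)$-edge-coloring of $M_p'$ from Lemma \ref{lem:M_p'} descends naturally through the expansion and suppression of the vertices $c_i$, and the flow bound follows because neither operation decreases the circular flow number. The extra bookkeeping you supply about the triple color $i+4t+7$ at $c_i$ and the compatibility at the far endpoints after suppression is a correct elaboration of what the paper leaves implicit.
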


\begin{proof}
	Since expanding and suppressing vertices does not decrease the circular flow number of graphs $\phi_c(\tilde{M_p})\ge \phi_c(M_p')> 2+\frac{1}{p}$. Furthermore a natural $(4p+1)$-edge-coloring is defined on $\tilde{M_p}$ by the edge-coloring of $M_p'$.
\end{proof}

\begin{cor}
	Let $p\ge3$ be any odd integer and let $k=2p$. There is a $(2k+1)$-regular class $1$ graph $G$ such that $\phi_c(G)>2+\frac{2}{k}$.
\end{cor}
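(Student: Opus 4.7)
The plan is to simply take $G = \tilde{M_p}$ and verify that it has all the required properties, by translating between the parameters $p$ and $k$. Since $k = 2p$ by hypothesis, the regularity degree $4p+1$ of $\tilde{M_p}$ coincides with $2k+1$, so $\tilde{M_p}$ is indeed $(2k+1)$-regular. The class $1$ property and the flow bound were both established in the previous theorem.

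More concretely, I would begin by fixing an odd integer $p \geq 3$, setting $k = 2p$, and letting $G := \tilde{M_p}$ be the graph constructed in the previous subsection. By the preceding theorem, $G$ is $(4p+1)$-regular and class $1$, and satisfies $\phi_c(G) > 2 + \frac{1}{p}$. The only thing left is to rewrite this in terms of $k$: since $k = 2p$, we have $2k+1 = 4p+1$, so $G$ is $(2k+1)$-regular, and
\[
2 + \frac{2}{k} \;=\; 2 + \frac{2}{2p} \;=\; 2 + \frac{1}{p} \;<\; \phi_c(G),
\]
which is exactly the required inequality.

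There is no real obstacle here; this corollary is a pure change of variables from the previous theorem, and the restriction to even $k$ (i.e.\ $k = 2p$ with $p$ odd) simply reflects the parity constraints imposed by the use of \textsc{Construction} of $M_p$ (which demands $p \geq 3$) and by the odd-$p$ assumption made just before the construction of $M_p'$ and $\tilde{M_p}$. Hence no further work is needed beyond invoking the theorem and matching parameters.
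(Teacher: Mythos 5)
Your proposal is correct and matches the paper's intent exactly: the corollary is stated without proof there precisely because it is the immediate change of variables $k=2p$ applied to the preceding theorem, giving $2k+1=4p+1$ and $2+\frac{2}{k}=2+\frac{1}{p}<\phi_c(\tilde{M_p})$. No further work is needed.
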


\section*{Appendix}

\begin{proof}[Proof of Lemma \ref{lem:Flower}]
	We use induction on $n$. The statement holds true for $n=1$. Let $n\ge2$.
	For all $i\in \Z_{2n+1}$, let $J_{2n+1}^i$ be $G[\{a_i,b_i,c_i,d_i\}]$ and let $E_{i,i+1}=E(J_{2n+1}^i,J_{2n+1}^{i+1}) = \{a_ia_{i+1},c_id_{i+1},c_{i+1}d_i \}$. Suppose that there is $i\in \Z_{2n+1}$ such that $E_{i,i+1}\cap M = \emptyset$. Then it follows that $E_{i+2,i+3}\cap M = \emptyset$ as well. Remove $J_{2n+1}^{i+1}$ and $J_{2n+1}^{i+2}$ from $J_{2n+1}$ and add the edges $\{a_{i}a_{i+3},c_{i}d_{i+3},c_{i+3}d_{i} \}$. This new graph $H$ is isomorphic to the Flower snark $J_{2n-1}$ and so $H+M'$ is class $1$, where $M' = M \setminus (E(J_{2n+1}^{i+1}) \cup E(J_{2n+1}^{i+2}) \cup E_{i+1,i+2})$. Therefore, a natural $4$-edge-coloring $c$ is defined on $E(J_{2n+1}+M)\setminus(E(J_{2n+1}^{i+1}) \cup E(J_{2n+1}^{i+2}) \cup E_{i+1,i+2})$, such that $c(a_{i}a_{i+1})=c(a_{i+2}a_{i+3}), c(d_{i}c_{i+1})=c(d_{i+2}c_{i+3})$ and $c(c_{i}d_{i+1})=c(c_{i+2}d_{i+3})$. Let 
	\begin{itemize}
		\item $h_1 = c(c_{i}d_{i+1})=c(c_{i+2}d_{i+3})$;
		\item $h_2 = c(d_{i}c_{i+1})=c(d_{i+2}c_{i+3})$;
		\item $h_3 = c(a_{i}a_{i+1})=c(a_{i+2}a_{i+3})$.
	\end{itemize} Since $M\cap E_{i,i+1}$ is empty, $|E_{i+1,i+2}\cap M|=2 $ and we can assume without loss of generality that $E_{i+1,i+2}\cap M=\{c_id_{i+1},c_{i+1}d_i\}$ (and so $(E(J_{2n+1}^{i+1})\cup E(J_{2n+1}^{i+2})) \cap M=\{a_{i+1}b_{i+1},a_{i+2}b_{i+2}\}$). Furthermore it cannot happen that $h_1=h_2=h_3$. Indeed, we can assume w.l.o.g. that $M\cap E(J_{2n+1}^i) = \{b_{i}c_{i}\}$. Let $h_4 = c(b_id_i)$ and $h_5 = c(b_ia_i)$. And so either $h_1=h_4\ne h_2$ or $h_1=h_5\ne h_3.$ Now we extend the coloring on $J_{2n+1}$ to a proper $4$-edge-coloring.

	Consider the following auxiliary graph $G' = G[V(J_{2n+1}^{i+1})\cup V(J_{2n+1}^{i+2})]+M'$, where $M'=((E(G[V(J_{2n+1}^{i+1})\cup V(J_{2n+1}^{i+2})]) \cap M) \cup \{a_{i+1}a_{i+2},c_{i+1}d_{i+2},c_{i+2}d_{i+1}\}$. Figure \ref{Fig:coloring_flower} represents $G'$. Extending $c$ to a proper $4$-edge-coloring of $J_{2n+1}$ is equivalent to finding a proper edge-coloring of $G'$ for all possible $h_1 = c(c_{i+1}d_{i+2}),h_2=c(c_{i+2}d_{i+1}),h_3=c(a_{i+1}a_{i+2})$ non pairwise equal. Such a coloring is depicted in Figure \ref{Fig:coloring_flower}.
	
	Now we can assume that, for all $i\in\Z_{2n+1}$, $E_{i,i+1}\cap M \ne \emptyset$. In particular $|E_{i,i+1}\cap M|=1$. Indeed $|E_{i,i+1}\cap M|\ne E_{i,i+1}$ for otherwise the vertices $b_i$ and $b_{i+1}$ would not be matched. On the other hand, if $|E_{i,i+1}\cap M|=2$, then $E_{i-1,i}\cap M$ and $E_{i+1,i+2}\cap M$ would both be empty, a case that we already discussed.
	
	Define the following function $$t(E_{i,i+1}) = \begin{cases} 
	
	x_1=(1,0,0) & \mbox{if } E_{i,i+1}\cap M = \{c_id_{i+1}\} \\
	x_2=(0,1,0) & \mbox{if } E_{i,i+1}\cap M = \{c_{i+1}d_{i}\} \\
	x_3=(0,0,1) &\mbox{if } E_{i,i+1}\cap M = \{a_ia_{i+1}\}
	
\end{cases}$$

\begin{claim}
	There is $j \in\Z_{2n+1}$ such that $t(E_{j,j+1}) = t(E_{j+2,j+3})$.
\end{claim}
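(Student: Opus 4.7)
Write $s_i := t(E_{i,i+1})$ for $i \in \Z_{2n+1}$, so the goal is to produce $j$ with $s_j = s_{j+2}$. The first step is to catalogue the admissible pairs $(s_{i-1},s_i)$. At every $b_i$ exactly one of $b_ia_i$, $b_ic_i$, $b_id_i$ lies in $M$, and the two remaining vertices of $\{a_i,c_i,d_i\}$ must then be matched by the unique edges of $E_{i-1,i}\cap M$ and $E_{i,i+1}\cap M$. A short case analysis on the three choices of $M\cap\{b_ia_i,b_ic_i,b_id_i\}$ yields exactly six admissible pairs, namely $(x_1,x_1), (x_2,x_2), (x_1,x_3), (x_3,x_1), (x_2,x_3), (x_3,x_2)$; in particular the pairs $(x_1,x_2)$, $(x_2,x_1)$ and $(x_3,x_3)$ are forbidden.

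The plan is then a proof by contradiction: assume $s_j\neq s_{j+2}$ for every $j$, and split on whether some $s_i=s_{i+1}$.

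Suppose first that $s_i=s_{i+1}$ for some $i$. The admissibility list forces this common value to lie in $\{x_1,x_2\}$; say WLOG $s_i=s_{i+1}=x_1$. Combining the admissible transitions with the contradiction hypotheses $s_{i-1}\neq s_{i+1}$ and $s_i\neq s_{i+2}$ forces $s_{i-1}=s_{i+2}=x_3$, then $s_{i-2}=s_{i+3}=x_2$, and then $s_{i-3}=s_{i+4}=x_2$, producing a fresh consecutive equality at positions $i+3, i+4$. Iterating the same forcing argument at this new pair shows that the whole cyclic sequence is forced to the period-$6$ pattern $\ldots, x_1, x_1, x_3, x_2, x_2, x_3, x_1, x_1, \ldots$. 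Since $2n+1$ is odd, it is not a multiple of $6$, so this pattern cannot close up cyclically on $\Z_{2n+1}$ --- contradiction.

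Suppose finally that $s_i\neq s_{i+1}$ for every $i$. Then every admissible pair lies among the four non-reflexive ones, each of which involves $x_3$. Hence for each $i$ at least one of $s_i, s_{i+1}$ equals $x_3$, and since $(x_3,x_3)$ is forbidden, exactly one does. Thus $x_3$'s and non-$x_3$'s alternate strictly around the cycle, forcing $2n+1$ to be even --- the final contradiction. The delicate point is the first case: one must verify that the consecutive equality really propagates all the way around the cycle and produces a rigid period-$6$ pattern, which is precisely how the oddness of $2n+1$ obstructs cyclic closure.
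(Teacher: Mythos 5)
Your proof is correct. You and the paper start from the same structural input --- the table of admissible consecutive pairs, i.e.\ that $(x_1,x_2)$, $(x_2,x_1)$ and $(x_3,x_3)$ are exactly the forbidden transitions --- but you exploit it differently. The paper recasts the sequence as an edge-colouring of $C_{2n+1}$, assumes no two edges at distance two share a colour, deduces that colour-$3$ edges are isolated with one neighbour of each of the colours $1$ and $2$ while colour-$1$ and colour-$2$ edges occur in adjacent pairs, and then the count $2n+1=2m_1+2m_2+m_3$ forces $m_3$ to be odd, which is incompatible with the $1$-blocks and $2$-blocks alternating around the cycle. You instead run a local forcing argument: either two consecutive terms agree, in which case the whole cyclic sequence is rigidly forced into the period-$6$ word $x_1x_1x_3x_2x_2x_3$, or no two consecutive terms agree, in which case $x_3$ strictly alternates with $\{x_1,x_2\}$ and the cycle would have to have even length. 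Both arguments ultimately reduce to the parity of $2n+1$; the paper's counting is shorter and treats all configurations uniformly, while your propagation is more elementary and makes the extremal structure (the period-$6$ pattern) explicit. The only step worth spelling out in your first case is that the forced word has fundamental period exactly $6$ (no period $1$, $2$ or $3$), since that is what allows you to pass from ``$2n+1$ is odd, hence not divisible by $6$'' to ``no shift by $2n+1$ preserves the pattern, so it cannot close up on $\Z_{2n+1}$.''
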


\begin{proof}[Proof of the Claim.]
	Notice that, for all $i\in \Z_{2n+1}$, \begin{itemize}
		\item if $t(E_{i,i+1})=x_1$, then $t(E_{i+1,i+2}),t(E_{i-1,i})\in \{x_1,x_3\}$;
		\item if $t(E_{i,i+1})=x_2$, then $t(E_{i+1,i+2}),t(E_{i-1,i})\in \{x_2,x_3\}$;
		\item if $t(E_{i,i+1})=x_3$, then $t(E_{i+1,i+2}),t(E_{i-1,i})\in \{x_1,x_2\}$.
	\end{itemize}
	Consider the circuit $C_{2n+1}$ on $2n+1$ vertices, let $E(C_{2n+1})=\{e_1,e_2,\dots,e_{2n+1}\}$ such that for every $i$, $e_i$ is adjacent to $e_{i+1}$. Let $\tilde{c} \colon E(C_{2n+1}) \to \{1,2,3\} $ be a coloring such that there are no adjacent edges $e_i,e_{i+1}$ with either $\tilde{c}(e_i)=\tilde{c}(e_{i+1})=3$ or $\{\tilde{c}(e_{i}),\tilde{c}(e_{i+1})\} = \{1,2\} $. Proving the statement of the claim is equivalent to proving that there are two edges $e_{j},e_{j+2}\in E(C_{2n+1})$ such that $\tilde{c}(e_j)=\tilde{c}(e_{j+2})$.
	
	Suppose by contradiction that there are not such edges. Then edges of color $3$ must be adjacent to exactly one edge of color $2$ and one of color $1$. On the other hand, edges of color $2$ and, respectively $1$, must be adjacent to exactly one edge of color $3$ and one of color $2$, respectively $1$.
	Let $$m_s = \begin{cases} 
	\text{number of pairs of adjacent edges of color } s & \mbox{if } s \in \{1,2\} \\
	\text{number of edges of color } s & \mbox{if } s = 3,
	
\end{cases}$$ then we can count the length of $C_{2n+1}$ as follows $$2n+1 = 2m_1 + 2m_2 + m_3. $$
Thus, $m_3$ is an odd number and we conclude that there is a $j\in \Z_{2n+1}$ such that $\tilde{c}(e_{j+1})=3$ and $\tilde{c}(e_{j})=\tilde{c}(e_{j+2})\in \{1,2\}$, a contradiction.
\end{proof}

By the Claim, there is $j$ such that $t(E_{j,j+1}) = t(E_{j+2,j+3})$. The graph $K$ obtained from $J_{2n+1}$ by removing $J_{2n+1}^{j+1}$ and $J_{2n+1}^{j+2}$ and adding the edges $\{ a_{j}a_{j+3}$, $c_{j}d_{j+3}$, $c_{j+3}d_{j} \}$ is isomorphic to $J_{2n-1}$. Thus, $K+M''$ has a proper $4$-edge-coloring, where $M''= M \setminus (E(J_{2n+1}^{j+1}) \cup E(J_{2n+1}^{j+2}) \cup E_{j+1,j+2})$. Therefore, a natural proper $4$-edge-coloring is defined on $E(J_{2n+1}+M)\setminus(E(J_{2n+1}^{j+1}) \cup E(J_{2n+1}^{j+2}) \cup E_{j+1,j+2})$. Without loss of generality we assume $t(E_{j,j+1}) = t(E_{j+2,j+3})= x_1$. Then the edge-coloring can be extended to a proper $4$-edge-coloring of $J_{2n+1}$ just as previous case. \qedhere

\begin{figure}
	\centering
	\includegraphics[scale=0.8]{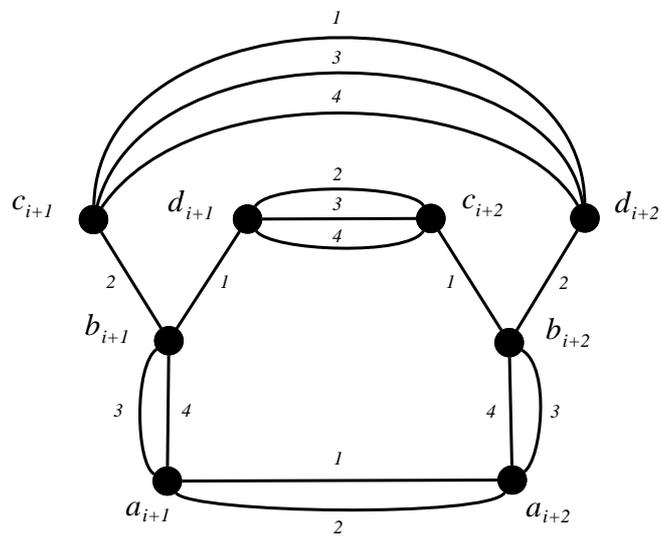}
	\caption{A coloring of the graph $G'$.}\label{Fig:coloring_flower}
\end{figure}

\end{proof}

\end{document}